\documentclass[preprint]{elsarticle}

\usepackage{textcomp}

\usepackage[T1]{fontenc}
\usepackage[english]{babel}
\selectlanguage{english}

\usepackage{amsmath,amsfonts,amssymb,amsthm}
\usepackage{mathtools}
\usepackage{graphicx}
\usepackage{url}
\usepackage{hyperref}
\usepackage{cleveref}
\usepackage{color}
\usepackage{appendix}  
\usepackage[margin=1in]{geometry}
\usepackage{float} 
\usepackage{soul} 


\usepackage{algorithm,algpseudocode}

\newcommand{\bit}{\begin{itemize}}
\newcommand{\eit}{\end{itemize}}
\newtheorem{theorem}{Theorem}

\newtheorem{definition}{Definition}

\newtheorem{remark}{Remark}

\newcommand{\Real}{\mathbb{R}}

\DeclareMathOperator{\rd}{\text{\upshape{d}}}

\DeclareMathOperator{\unvec}{unvec}

\newcommand{\bzero}{\mathbf{0}}
\renewcommand{\a}{\mathbf{a}}
\renewcommand{\b}{\mathbf{b}}

\newcommand{\f}{\mathbf{f}}
\newcommand{\g}{\mathbf{g}}
\newcommand{\h}{\mathbf{h}}

\renewcommand{\u}{\mathbf{u}}
\renewcommand{\v}{\mathbf{v}}
\newcommand{\w}{\mathbf{w}}
\newcommand{\x}{\mathbf{x}}
\newcommand{\y}{\mathbf{y}}
\newcommand{\z}{\mathbf{z}}


\newcommand{\A}{\mathbf{A}}
\newcommand{\B}{\mathbf{B}}
\newcommand{\C}{\mathbf{C}}
\newcommand{\F}{\mathbf{F}}

\newcommand{\I}{\mathbf{I}}

\newcommand{\M}{\mathbf{M}}

\renewcommand{\P}{\mathbf{P}}
\newcommand{\Q}{\mathbf{Q}}
\newcommand{\R}{\mathbf{R}}
\renewcommand{\S}{\mathbf{S}}
\newcommand{\T}{\mathbf{T}}
\newcommand{\U}{\mathbf{U}}
\newcommand{\V}{\mathbf{V}}
\newcommand{\W}{\mathbf{W}}
\newcommand{\X}{\mathbf{X}}
\newcommand{\Y}{\mathbf{Y}}

\newcommand{\enn}{{\rm n}}

\newcommand{\cE}{\mathcal{E}}
\newcommand{\cH}{\mathcal{H}}

\newcommand{\cL}{\mathcal{L}}

\newcommand{\tW}{\widetilde{\W}}

\newcommand{\tw}{\widetilde{\w}}
\newcommand{\tv}{\widetilde{\v}}


\usepackage[textsize=tiny]{todonotes}

\newcommand*\circled[1]{\tikz[baseline=(char.base)]{
\node[shape=circle, draw, inner sep=0.6pt] (char) {#1};}
}
\newcommand\kronF[2]{#1^{\circled{\tiny{#2}}}}



\journal{Computer Methods in Applied Mechanics and Engineering}

\begin{document}

\begin{frontmatter}

\title{Scalable Computation of Energy Functions for Nonlinear Balanced Truncation}
\author[UCSD]{Boris Kramer}

\affiliation[UCSD]{
organization={Department of Mechanical and Aerospace Engineering, University of California San Diego},
city={La Jolla},
postcode={92093-0411}, 
state={CA},
country={USA}
}

\author[VT]{Serkan Gugercin}
\author[VT]{Jeff Borggaard}
\author[VT]{Linus Balicki}

\affiliation[VT]{
organization={Department of Mathematics},
city={Blacksburg},
postcode={24061}, 
state={VA},
country={USA}
}

\begin{abstract}
Nonlinear balanced truncation is a model order reduction technique that reduces the dimension of nonlinear systems in a manner that accounts for either open- or closed-loop observability and controllability aspects of the system. A computational challenges that has so far prevented its deployment on large-scale systems is that the energy functions required for characterization of controllability and observability are solutions of various high-dimensional Hamilton-Jacobi-(Bellman) equations, which are computationally intractable in high dimensions.
This work proposes a unifying and scalable approach to this challenge by considering a Taylor-series-based approximation to solve a class of parametrized Hamilton-Jacobi-Bellman equations that are at the core of nonlinear balancing. The value of a formulation parameter provides either open-loop balancing or a variety of closed-loop balancing options. To solve for the coefficients of Taylor-series approximations to the energy functions, the presented method derives a linear tensor system and heavily utilizes it to numerically solve structured linear systems with billions of unknowns.
The strength and scalability of the algorithm is demonstrated on two semi-discretized partial differential equations, namely the Burgers  and the Kuramoto-Sivashinsky equations. 
\end{abstract}

\begin{keyword}
Reduced-order modeling \sep balanced truncation \sep nonlinear manifolds \sep Hamilton-Jacobi-Bellman equation \sep nonlinear systems
\end{keyword}

\end{frontmatter}

\section{Introduction} \label{sec:intro}
Simulation of large-scale nonlinear dynamical systems can be time-consuming and resource-intensive.  It is a frequent bottleneck when these simulations are used for real-time, model-based control. Reduced-order models (ROMs) provide an attractive solution to this problem by approximating dynamical systems (and their relevant system-theoretic properties) in a much lower dimensional state space; see, e.g., \cite{AntBG20, antoulas05, BCOW2017morBook, Zho96, SvdVR08,BGW15surveyMOR} for a general overview. These ROMs then allow for the design of low-dimensional controllers and filters. 

Balanced truncation model reduction, pioneered by Moore~\cite{moore81principal} and
Mullis and Roberts~\cite{mullis1976synthesis}
for \textit{linear} time-invariant (LTI) systems, provides an elegant approach to model reduction for open-loop settings. The approach uses the controllability and observability energies of a system to determine those states that have the most relative importance. If a state requires a large amount of input energy to be reached and also has minimal effect on the output, then the reduced model would neglect that state without a significant impact on the input-output behavior of the system. Extensions of this concept to closed-loop LTI systems led to the LQG balancing \cite{verriest1981suboptimal,jonckheere1983LQGbalancing} and $\mathcal{H}_\infty$ balancing concepts~\cite{glover91Hinftybalancing}. 
Several variants for different formulations of the LTI system and different reduced-model outcomes exist, such as stochastic balancing~\cite{desai1984stochasticBal,green1988balancedstochastic}, bounded real balancing~\cite{opdenacker1988contraction}, positive real balancing~\cite{desai1984stochasticBal,ober1991balanced}, frequency-weighted balancing~\cite{enns1984model}; see also the surveys~\cite{gugercin2004survey,benner2017chapter}. The interest in balancing methods for LTI systems in the 1980s stimulated research in computational methods for solving large-scale Lyapunov or Riccati-type algebraic matrix equations that yielded low-rank solvers \cite{benner2013numerical,simoncini2016computational} and doubling methods~\cite{li2013solving} that can solve these matrix equations for millions of states.

For \textit{nonlinear} large-scale systems, the theory of balanced truncation is largely developed, yet computationally scalable approaches and efficient ROM development remain open problems. 
The theoretical foundation for balanced truncation of nonlinear open-loop systems proposed by Scherpen~\cite{scherpen1993balancing} defines input and output energy functions and shows that they can be computed as solutions to Hamilton-Jacobi ({HJ}) partial differential equations (PDEs). Theoretical extensions of~\cite{scherpen1993balancing} to the closed-loop setting, such as Hamilton-Jacobi-Bellman (HJB) balancing~\cite{scherpen1994normCoprime}  and $\mathcal{H}_\infty$ balancing~\cite{scherpen1996hinfty_balancing} have also been proposed for nonlinear systems. 
As in the linear case, there are also structure-preserving balancing such as dissipativity-preserving~\cite{ionescu2010dissipativity} and positive-real balancing~\cite{ionescu2009positive} for nonlinear systems.
In all of these approaches though, the main challenge is the computational bottleneck of solving HJB PDEs, which is intractable for systems of even modest order. 
%

%
Fujimoto and Tsubakino~\cite{fujimoto2008TaylorSeriesBT} use Taylor series approximations of the controllability and observability energy functions, as well as the singular value functions, to solve the HJ equations (an idea that goes back to \cite{Albrekht1961,lukes1969optimal}). Their subsequent balanced ROMs are illustrated on a four-dimensional ordinary differential equation (ODE). Building on~\cite{fujimoto2008TaylorSeriesBT}, the authors in \cite{sahyoun2013reduced} consider the special class of quadratic models and use Taylor series expansion to solve the HJB equations and balancing transformations.  However, no numerical examples are given. 
In these existing Taylor-series-based approaches for nonlinear balancing, an explicit structure to obtain the coefficients of the energy function and a corresponding scalable numerical algorithm are missing.

An alternative approach to solving the HJB equations is to use max-plus methods~\cite{mceneaney2007maxplusHJB}, which can provide significant computational savings, but require highly nontrivial customization to dynamical systems forms; those methods are hence less generalizable when compared to Taylor-series based methods.
Instead of focusing on the energy functions, approximate balanced truncation methods for nonlinear systems have been proposed that focus on algebraic Gramians, which is appealing from a computational perspective. For instance~\cite{verriest2006algebraicGramiansNLBT} derives such Gramians as the solution of Lyapunov-type equations, \cite{al1994new} proposed algebraic Gramians for bilinear systems, and \cite{condon2005nonlinear} combined Carleman bilinearization with that balancing method to approximately balance weakly nonlinear systems. A computationally efficient framework via truncated algebraic Gramians for quadratic-bilinear systems has been proposed in~\cite{bennerGoyal2017BT_quadBilinear}, which also applies to more general nonlinear systems via lifting, see~\cite{KW2019_balanced_truncation_lifted_QB}.
Empirical Gramians for nonlinear systems~\cite{lall2002subspace} can also be used, yet their computation requires as many simulations of the full system as there are inputs and outputs: each simulation uses an impulse disturbance per input channel while setting the other inputs to zero. 
Newman and Krishnaprasad~\cite{newman2000MC_BT} show that the controllability energy function is related to the stationary density $p_\infty$ of a Markov process. They suggest to solve the corresponding Fokker-Planck equations for $p_\infty$ instead and further show that for a particular class of systems with Hamiltonian structure, this relationship is exact. However, this shifts the computational burden to solving the Fokker-Planck equations. Hence, their test model is a two-dimensional ODE.
A method for approximating the nonlinear balanced truncation reduction map via reproducing kernel Hilbert spaces (a machine learning-based, data-driven technique) has been proposed in~\cite{bouvrie2017kernel}, and applied to two- and seven-dimensional ODE examples. 
As evident from the literature, scalable and computationally efficient nonlinear balanced truncation approaches based on the energy functions that work for both closed- and open-loop systems are currently lacking for medium- and large-scale systems.

There are three main contributions of this paper.
First, we propose a unifying Taylor series-based solution to a class of parametrized HJB equations that yield the open- \textit{and} closed-loop nonlinear balancing energy functions. Depending on the value of a parameter $\gamma$, we either obtain open-loop ($\gamma=1$) or otherwise a family of closed-loop energy functions. 
Second, we derive the explicit tensor structure for the coefficients of the Taylor expansion. Our numerical methods subsequently exploit this structure to allow scalability up to thousands of state variables.
Third, we provide a scalability and solvability analysis, large-scale numerical experiments,  and open-access software for all algorithms and examples in the \texttt{NLbalancing} repository~\cite{NLbalancing}.

Our computational solution to the parametrized HJB equations assumes a quadratic dynamical system. The motivation for this is threefold: 
First, from a computational perspective, solving the $\cH_\infty$ equations (see equation~\eqref{eq:HinftyenergyFunctions1} below) for a general nonlinear function remains very challenging beyond even two degrees of freedom. No general approaches exist that would lend themselves to the scalability required for model reduction. 
Second, many models in fluid mechanics are quadratic, such as the Navier-Stokes, Burgers, Kuramoto-Sivashinsky, Fisher-KPP equations, and the shallow-water equations. These equations model nonlinear fluid flow, include phenomena such as travelling waves and shocks, and serve as numerical test beds for more complex engineering models. 
Third, focusing on quadratic drift is not very restrictive: many nonlinear dynamical systems can be transformed into quadratic nonlinearities with additional bilinear terms via variable transformations and the introduction of auxiliary variables \cite{mccormick1976computability, kerner1981universal,savageau1987recasting,gu2011qlmor,bennerBreiten2015twoSided,KW18nonlinearMORliftingPOD,SKHW2020_learning_ROMs_combustor,guillot2019taylor,QKPW2020_lift_and_learn}. Explicit algorithms to find such quadratizations can be found in \cite{lifeware2,lifeware1,Bychkov2021,ByIsPoKr_Quadratization_2023}.

This paper is organized as follows. Section~\ref{sec:NLBal} reviews energy functions for nonlinear systems, their associated HJB equations and resulting controllers. Section~\ref{sec:computingEF} presents the proposed computational framework, including an analysis thereof, for efficiently solving high-dimensional parametrized HJB equations. 
Section~\ref{sec:numerics} presents numerical results for two semi-discretized PDEs: Burgers equation and the the Kuramoto-Sivashinsky equation. Section~\ref{sec:conclusions} offers conclusions and an outlook toward future work.

\section{Energy functions for nonlinear balancing} \label{sec:NLBal}
Nonlinear balanced truncation proceeds in two stages: First, define an energy function (e.g., controllability and observability), and compute those efficiently. Second, find a variable transformation that ``diagonalizes" these energy functions. In the linear case, these energy functions are quadratic, and diagonalization means that the \textit{linear} transformation is chosen so that the observability and controllability Gramians are diagonal. The nonlinear case is more involved, as both the energy functions and transformations become nonlinear. 
Section~\ref{ss:energyFcts} reviews background material on energy functions, the associated HJB equations, and the resulting controllers. Section~\ref{ss:NL-Bal} defines the nonlinear observability and controllability energy functions for the open-loop setting. Section~\ref{ss:HinftyBal} defines nonlinear $\cH_\infty$-type energy functions for closed-loop systems. Special cases are provided in the appendix for the interested reader. Section~\ref{ss:HJBbal} reviews energy functions from the HJB-balancing framework; Section~\ref{ss:LQGbal} defines extensions of the LQG energy functions for nonlinear systems.

\subsection{Energy functions, Hamilton-Jacobi-Bellman equations, and optimal controllers}\label{ss:energyFcts}
The balanced truncation problem is tightly connected to the optimal control problem, which we illustrate in this background section. We consider the finite-dimensional nonlinear dynamical system
\begin{align}
\dot{\x}(t)  = \f(\x(t))  + \g(\x(t)) \u(t), \qquad \y(t) = \h(\x(t)),\label{eq:FOMNL}
\end{align}
where $t$ denotes time, $\x(t) \in \Real^{n}$ is the state, 
$\u(t) \in \Real^m$ is a time-dependent input vector, 
$\g:\Real^n\mapsto\Real^{n\times m}$  encodes the actuation mechanism,
$\y(t) \in \Real^p$ is the output vector measured by the function $\h:\Real^n \mapsto \Real^p$, the nonlinear drift term is $\f:\Real^n\mapsto\Real^n$, and $\x =\bzero$ is an isolated equilibrium for $\u=\bzero$.
Consider the energy function
\begin{equation}
    \widehat{\cE}(\x_0,\u) =  \frac{1}{2} \int_{0}^\infty Q(\x(t)) + R(\u(t)) \text{d} t, \quad \x(0)=\x_0, \label{eq:energy_general}
\end{equation}
where the state and control penalty functions $Q(\x(t))$ and $R(\x(t))$ are non-negative. The goal of a nonlinear feedback control is to find a control $\u^*=\u^*(\x)$ that minimizes the energy (or cost) in \eqref{eq:energy_general}, i.e.,
\begin{equation} \label{eq:optimalcontrol}
\u^* := \arg \min_\u \widehat{\cE}(\x_0,\u) \quad \Rightarrow \quad \cE(\x_0) := \widehat{\cE}(\x_0,\u^*).
\end{equation}
A necessary (and in fact also sufficient~\cite{almubarak2019infinite}) condition is that the minimizer $\u^*$ must satisfy the HJB equation
\begin{equation} \label{eq:HJBgeneral}
    0 = \min_\u \left \{ Q(\x) + R(\u) + \frac{\partial \cE(\x)}{\partial \x} \left [\f(\x) + \g(\x) \u \right ] \right \}.
\end{equation}
With a few additional assumptions and a quadratic cost function, we can obtain a closed form feedback solution to the nonlinear control problem~\eqref{eq:optimalcontrol}.
\begin{theorem}[\hspace{1sp}\cite{lukes1969optimal}] \label{thm:Lukes}
Consider the nonlinear dynamical system~\eqref{eq:FOMNL} with the quadratic cost function
\begin{equation}
    \widehat{\cE}(\x_0,\u) =  \frac{1}{2} \int_{0}^\infty \x(t)^\top \Q \x(t) + \u(t)^\top \R \u(t) {\rm{d}} t.
\end{equation}
Let the following assumptions hold: (1) there exists a neighborhood $\Omega\subseteq \Real^n$ of the origin where $\f\in C^2(\Omega)$; $\f(\bzero) = \bzero$; (2) the pair $\left ( \frac{\partial \f}{\partial \x} (\bzero), \g(\bzero) \right )$ is stabilizable; (3) the system~\eqref{eq:FOMNL} is stabilizable on $\Omega$, so there exists a stabilizing controller such that the closed-loop system is asymptotically stable on $\Omega$. 
Then there exists a unique continuously differentiable minimizer for the optimal feedback control $\u^*(\x)$ that solves the HJB equation~\eqref{eq:HJBgeneral}, and this minimizer is given by
\begin{equation} \label{eq:ustar}
    \u^*(\x) = -\R^{-1} \g(\x)^\top \frac{\partial^\top \cE(\x)}{\partial \x}.
\end{equation}
Moreover, if $\f(\x)$ is analytic, so are $\u^*(\x)$ and $\cE(\x)$.
\end{theorem}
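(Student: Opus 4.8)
The plan is to eliminate $\u$ from~\eqref{eq:HJBgeneral}, reducing it to a single first-order PDE for $\cE$; then to build $\cE$ as a power series about the origin whose Taylor coefficients are determined recursively; then to prove the series converges when $\f$ is analytic; and finally to close with a completion-of-squares verification argument identifying $\cE$ with the value function and giving uniqueness. First I would perform the inner minimization in~\eqref{eq:HJBgeneral}: with the quadratic running cost the bracket is, for each fixed $\x$, a strictly convex quadratic in $\u$ (since $\R\succ\bzero$), so its unique minimizer is the stationary point, which is exactly~\eqref{eq:ustar}. Substituting $\u^*$ back into~\eqref{eq:HJBgeneral} removes $\u$ and leaves the scalar ``generalized HJB'' equation
\begin{equation*}
0 = \tfrac12\,\x^\top\Q\x + \frac{\partial\cE(\x)}{\partial\x}\,\f(\x) - \tfrac12\,\frac{\partial\cE(\x)}{\partial\x}\,\g(\x)\,\R^{-1}\,\g(\x)^\top\!\left(\frac{\partial\cE(\x)}{\partial\x}\right)^{\!\top},
\end{equation*}
together with $\cE(\bzero)=0$; it then suffices to exhibit a $C^1$ (resp.\ analytic) solution near the origin and to show it is the value function, since $\u^*\in C^1$ follows from~\eqref{eq:ustar} and the continuity of $\g$.

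Next I would construct $\cE$ degree by degree. Writing $\A:=\frac{\partial\f}{\partial\x}(\bzero)$, $\B:=\g(\bzero)$ and $\cE=\tfrac12\,\x^\top\X\x+\sum_{k\ge 3}\cE_k$ with $\cE_k$ homogeneous of degree $k$, the degree-$2$ part of the PDE forces $\X$ to satisfy the algebraic Riccati equation $\A^\top\X+\X\A-\X\B\R^{-1}\B^\top\X+\Q=\bzero$; assumptions~(2)--(3) guarantee that the linearized LQR problem is well posed, so this equation has a symmetric positive semidefinite stabilizing solution and the closed-loop matrix $\A_{\mathrm{cl}}:=\A-\B\R^{-1}\B^\top\X$ is Hurwitz. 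For each $k\ge 3$, the degree-$k$ part of the PDE is a \emph{linear} equation $\frac{\partial\cE_k(\x)}{\partial\x}\,\A_{\mathrm{cl}}\x=r_k(\x)$, where $r_k$ is a homogeneous polynomial of degree $k$ built only from $\cE_2,\dots,\cE_{k-1}$ and the Taylor coefficients of $\f$ and $\g$. On homogeneous degree-$k$ polynomials the map $\cE_k\mapsto\frac{\partial\cE_k}{\partial\x}\A_{\mathrm{cl}}\x$ has eigenvalues of the form $a_1\mu_1+\cdots+a_n\mu_n$ with $\mu_i\in\sigma(\A_{\mathrm{cl}})$ and nonnegative integers $a_i$ summing to $k$; since $\A_{\mathrm{cl}}$ is Hurwitz these are nonzero, so the map is invertible and $\cE_k$ is uniquely determined. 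This produces the formal Taylor series of $\cE$, and it is unique; $\u^*$ then follows from~\eqref{eq:ustar}. (Solving these structured linear systems at scale is exactly what Section~\ref{sec:computingEF} addresses.)

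Then I would prove convergence. When $\f$ and $\g$ are analytic at $\bzero$, I would use the method of majorants: bound the recursively defined coefficients of $\cE$ by the coefficients of a scalar comparison equation whose majorant is algebraic, hence analytic at $0$ with positive radius of convergence, forcing $\cE$ (and then $\u^*$) analytic near $\bzero$; under $\f\in C^2$ only, a local contraction/truncation argument instead gives a solution of the same regularity, in particular $C^1$. I expect this to be the main obstacle: the quadratic term $\frac{\partial\cE}{\partial\x}\g\R^{-1}\g^\top(\frac{\partial\cE}{\partial\x})^\top$ couples \emph{all} lower-order coefficients of $\cE$, so the convergence proof needs bounds on the inverses of the degree-$k$ operators $\mathcal{L}_k$ that are uniform in $k$ (available from $\A_{\mathrm{cl}}$ Hurwitz, e.g.\ via $\mathcal{L}_k^{-1}=-\int_0^\infty e^{s\mathcal{L}_k}\,\mathrm{d}s$) together with careful control of the convolution sums defining $r_k$.

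Finally I would run a verification argument. A standard Lyapunov/LaSalle argument—$\cE$ is positive near the origin (its quadratic part being $\tfrac12\x^\top\X\x$) and decreases along the closed loop, $\dot\cE=-\tfrac12\big(\x^\top\Q\x+\u^*(\x)^\top\R\,\u^*(\x)\big)\le 0$—shows the feedback $\u^*$ stabilizes the origin locally, hence is admissible. For any admissible $\u$ whose trajectory stays in $\Omega$, inserting $\pm\int_0^\infty\frac{\mathrm{d}}{\mathrm{d}t}\cE(\x(t))\,\mathrm{d}t$ into $\widehat\cE(\x_0,\u)$ and simplifying the integrand with the PDE and~\eqref{eq:ustar} (using $\cE(\x(\infty))=0$, with the usual $\liminf$ argument when $\u$ fails to drive $\x$ to $\bzero$) gives
\begin{equation*}
\widehat\cE(\x_0,\u) - \cE(\x_0) = \tfrac12\int_0^\infty \big(\u(t)-\u^*(\x(t))\big)^\top\R\,\big(\u(t)-\u^*(\x(t))\big)\,\mathrm{d}t \ge 0,
\end{equation*}
with equality iff $\u\equiv\u^*$. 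Hence $\cE(\x_0)=\widehat\cE(\x_0,\u^*)=\min_\u\widehat\cE(\x_0,\u)$: $\cE$ is the value function and $\u^*$ the unique minimizing feedback. The same identity yields uniqueness of the continuously differentiable solution of~\eqref{eq:HJBgeneral}, since any such solution vanishing at the origin and inducing a stabilizing feedback must equal the value function, hence coincide with the $\cE$ of the construction, with $\u^*$ correspondingly unique.
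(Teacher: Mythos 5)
The paper does not prove this theorem; it quotes it from \cite{lukes1969optimal}, so there is no in-paper argument to compare against. Your reconstruction is the classical Al'brekht--Lukes route and is structurally sound: pointwise strict convexity of the bracket in \eqref{eq:HJBgeneral} gives \eqref{eq:ustar}, back-substitution gives the scalar HJB PDE, the degree-$2$ part is the ARE whose stabilizing solution makes $\A_{\mathrm{cl}}$ Hurwitz, the higher-degree parts are linear problems whose operators are invertible because the eigenvalues $\sum_i a_i\mu_i$ (with $\mu_i\in\sigma(\A_{\mathrm{cl}})$, $\sum_i a_i=k$) have strictly negative real part, and the completion-of-squares verification identifies $\cE$ with the value function and gives uniqueness of the minimizer. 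This is in fact the same machinery the paper then exploits computationally in Theorems~\ref{thm:wi} and \ref{thm:vi}.

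Two places deserve more care. First, the convergence step is the substance of Lukes' paper and you have only sketched it: the bound you propose, ``$\|\mathcal{L}_k^{-1}\|$ uniform in $k$ via $\mathcal{L}_k^{-1}=-\int_0^\infty e^{s\mathcal{L}_k}\,\rd s$,'' is not automatic because the operators $\mathcal{L}_k$ are not normal, so spectral information alone does not control $\|e^{s\mathcal{L}_k}\|$; one needs $k$-independent constants in $\|e^{s\mathcal{L}_k}\|\le C_k e^{-\alpha k s}$ (or a genuine majorant-series comparison as in Lukes) before the convolution sums defining $r_k$ can be summed. Likewise the $C^2$ case is asserted rather than argued; Lukes handles it by a separate successive-approximation scheme, not by truncating the series. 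Second, the existence of a \emph{stabilizing} positive semidefinite solution of $\A^\top\X+\X\A-\X\B\R^{-1}\B^\top\X+\Q=\bzero$ requires, beyond stabilizability of $(\A,\B)$, a detectability-type condition on $(\Q,\A)$ (or $\Q\succ\bzero$, which is what Lukes assumes); the theorem as restated in the paper is loose on this point, but your proof should make the assumption explicit since the Hurwitz property of $\A_{\mathrm{cl}}$ is what drives every subsequent step.
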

Since the terms inside the integral of the energy function are quadratic in $\x^*$ and $\u^*$, it can be shown directly that this energy function satisfies the Lyapunov conditions (i.e., $\cE(\x) >0$ and $\frac{\text{d}\cE}{\text{d}t}(\x) <0$ for all $\x \in \Omega \backslash \bzero$). 
Specifically, under the assumptions of Theorem~\ref{thm:Lukes}, the energy function $\cE(\x)$ in \eqref{eq:optimalcontrol} is a Lyapunov function for the nonlinear control-affine system~\eqref{eq:FOMNL}. In other words, with the controller $\u^*$ from Theorem~\ref{thm:Lukes}, the origin of the closed-loop system $\dot{\x}(t) = \f(\x(t))  + \g(\x(t)) \u^*(t)$ is asymptotically stable, see, e.g., \cite{almubarak2019infinite}.

In sum, once the HJB equation is solved for the energy function, an optimal control can be found via~\eqref{eq:ustar}.  The next sections detail different approaches to devising an energy function via HJ(B) equations.

\subsection{Nonlinear observability and controllability energy functions} \label{ss:NL-Bal}
Similar to the linear case~\cite{moore81principal}, for an {{\textit{asymptotically stable}}} nonlinear system~\eqref{eq:FOMNL} controllability and observability energy functions can be defined~\cite{scherpen1993balancing} as
\begin{align} \label{eq:energyFunctions1}
\cE_c(\x_0) & :=\min_{\substack{\u \in L_{2}(-\infty, 0] \\ \x(-\infty) = \bzero  \\ \x(0) = \x_0}} \ \frac{1}{2} \int_{-\infty}^{0} \Vert \u(t) \Vert^2 \text{d}t \\
\cE_o(\x_0) & :=\frac{1}{2} \int_{0}^{\infty} \Vert \y(t) \Vert^2 \text{d}t, \label{eq:energyFunctions2}
\end{align}
where $\mathcal{E}_c(\x_0)$ quantifies the minimum amount of energy required to steer the system from $\x(-\infty)=\bzero$ to $\x(0)=\x_0$, and $\mathcal{E}_o(\x_0)$ quantifies the output energy generated by the nonzero initial condition $\x_0$ and $\u(t)\equiv\bzero$.
The energy functions in \eqref{eq:energyFunctions1}--\eqref{eq:energyFunctions2} are solutions to Hamilton-Jacobi equations:
\begin{align} 
0 = & \frac{\partial \cE_c(\x)}{\partial \x } \f(\x) +\frac{1}{2}\frac{\partial \cE_c(\x)}{\partial \x} \g(\x) \g(\x)^\top \frac{\partial^\top \cE_c(\x)}{\partial \x} \label{eq:regularcase2}, \\[1ex]
0 = & \frac{\partial \cE_o(\x)}{\partial \x} \f(\x) + \frac{1}{2} \h(\x)^\top  \h(\x) \label{eq:regularcase1} .
\end{align}
Conditions for the energy functions to exist (i.e., to be finite) are that $\f$ is {{asymptotically stable}} in a neighborhood of the origin, and that $-\left (\f(\x) + \g(\x)\g(\x)^\top \frac{\partial^\top \cE_c(\x)}{\partial \x} \right )$ is asymptotically stable in a neighborhood of the origin~\cite{scherpen1993balancing}. For {asymptotically stable} {and minimal} linear dynamical systems, $\dot{\x} = \A\x + \B\u, \ \y = \C\x$, the controllability and observability functions are quadratic in the state, i.e.,
\begin{equation}
\cE_c(\x_0) = \frac{1}{2} \x_0^\top \P^{-1} \x_0, ~\quad \cE_o(\x_0) = \frac{1}{2} \x_0^\top \Q \x_0,
\end{equation}
where  $\P$ and $\Q$ are the unique positive definite solutions to the Lyapunov equations
\begin{equation} \label{eq:lyap}
\A \P + \P \A^\top + \B \B^\top = \bzero,~\quad
\A^\top \Q + \Q \A + \C^\top \C = \bzero,
\end{equation}
which can be derived from equations~\eqref{eq:regularcase2}--\eqref{eq:regularcase1}. The following section focuses on the closed-loop setting, which can also address unstable systems.

\subsection{\texorpdfstring{$\mathcal{H}_\infty$}{H-infinity} balancing for nonlinear systems} \label{ss:HinftyBal}
The development of a unifying $\mathcal{H}_\infty$ control framework for nonlinear systems creates a direct need for $\mathcal{H}_\infty$ balancing~\cite{scherpen1996hinfty_balancing}, which we briefly review herein. 
%
\begin{definition}\cite[Def. 5.1]{scherpen1996hinfty_balancing}
For the nonlinear dynamical system in 
\eqref{eq:FOMNL}, the $\mathcal{H}_\infty$ past energy in the state $\x_0$ is defined, for $0<\gamma$, $\gamma\neq 1$, as
\begin{equation} \label{eq:HinftyenergyFunctions1}
\cE_\gamma^{-}(\x_0)  :=\min_{\substack{\u \in L_{2}(-\infty, 0] \\ \x(-\infty) = \bzero, \\  \x(0) = \x_0}} \ \frac{1}{2} \int\displaylimits_{-\infty}^{0} (1-\gamma^{-2})\Vert \y(t) \Vert^2  +  \Vert \u(t) \Vert^2 {\rm{d}}t.
\end{equation}
Furthermore, the  $\mathcal{H}_\infty$ future energy in the state $\x_0$ is defined as 
\begin{equation} \label{eq:HinftyenergyFunctions2}
\cE_\gamma^{+}(\x_0)  :=\max_{\substack{\u \in L_{2}[0,\infty) \\ \x(0) = \x_0, \\  \x(\infty) = \bzero}} \ \frac{1}{2} \int\displaylimits_{0}^{\infty} \Vert \y(t) \Vert^2  +  \frac{\Vert \u(t) \Vert^2}{1-\gamma^{-2}} {\rm{d}}t,\quad {\rm{and}} \quad 
\cE_\gamma^{+}(\x_0)  :=\min_{\substack{\u \in L_{2}[0,\infty) \\ \x(0) = \x_0 , \\ \x(\infty) = \bzero}} \ \frac{1}{2} \int\displaylimits_{0}^{\infty} \Vert \y(t) \Vert^2  +  
\frac{\Vert \u(t) \Vert^2}{1-\gamma^{-2}} {\rm{d}}t
\end{equation}	
for $0<\gamma<1$ (left equation) and  for $\gamma>1$ (right equation). 
\end{definition}
As we see next, these energy functions take a special and well-known form when defined for an LTI system,  a result that goes back to~\cite[Prop. 4.8]{glover91Hinftybalancing}.

\begin{theorem}	\label{thm:HinftyARE} \cite[Thm. 3.4]{scherpen1996hinfty_balancing}
Let $\gamma_0$ denote the smallest $\tilde{\gamma}$ such that a stabilizing controller exists for which the $\mathcal{H}_\infty$ norm of the closed-loop system is less than $\tilde{\gamma}$.
Let $\gamma > \gamma_0\geq 0$. For an LTI system, the energy functions are quadratic, $\cE^{-}_\gamma(\x) = \frac{1}{2} \x^\top \Y_\infty^{-1} \x  $ and $\cE^{+}_\gamma(\x)  =\frac{1}{2} \x^\top  \X_\infty \x$, where $\Y_\infty, \ \X_\infty$ are the stabilizing symmetric positive definite solutions to the $\mathcal{H}_\infty$ AREs
\begin{align}
    \A \Y_\infty + \Y_\infty \A^\top + \B \B^\top - (1-\gamma^{-2}) \Y_\infty \C^\top \C \Y_\infty & = \bzero, \label{eq:HinftyRiccati1}\\
    \A^\top \X_\infty + \X_\infty \A + \C^\top \C - (1-\gamma^{-2}) \X_\infty \B \B^\top \X_\infty & = \bzero. \label{eq:HinftyRiccati2}
\end{align}
\end{theorem}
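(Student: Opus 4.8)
The plan is to specialize each $\mathcal{H}_\infty$ energy function to the LTI case $\f(\x) = \A\x$, $\g(\x) = \B$, $\h(\x) = \C\x$, verify that the associated Hamilton-Jacobi-Bellman equations reduce to the stated algebraic Riccati equations under a quadratic ansatz, and then invoke the existence/uniqueness of stabilizing solutions guaranteed by the hypothesis $\gamma > \gamma_0 \ge 0$. First I would write the HJB equations governing $\cE_\gamma^{-}$ and $\cE_\gamma^{+}$. By analogy with \eqref{eq:HJB-NLLQG4}–\eqref{eq:HJB-NLLQG3} (and \eqref{eq:regularcase1}–\eqref{eq:regularcase2} as the $\gamma \to \infty$ check, noting $1-\gamma^{-2} \to 1$ is not the relevant limit here; rather $\gamma\to 0$ recovers the regular case as observed after the Definition), the $\mathcal{H}_\infty$ past energy should satisfy
\begin{equation*}
0 = \frac{\partial \cE_\gamma^{-}}{\partial \x}\A\x + \frac{1}{2}\frac{\partial \cE_\gamma^{-}}{\partial \x}\B\B^\top\frac{\partial^\top \cE_\gamma^{-}}{\partial \x} - \frac{1}{2}(1-\gamma^{-2})\x^\top\C^\top\C\x,
\end{equation*}
with the sign of the output term carrying the factor $(1-\gamma^{-2})$ from \eqref{eq:HinftyenergyFunctions1}, and similarly for $\cE_\gamma^{+}$ with the control-penalty scaling $(1-\gamma^{-2})^{-1}$ appearing on the quadratic gradient term.

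Next I would substitute the quadratic ansatz $\cE_\gamma^{-}(\x) = \tfrac12\x^\top \Y_\infty^{-1}\x$, so that $\tfrac{\partial^\top\cE_\gamma^{-}}{\partial\x} = \Y_\infty^{-1}\x$. Plugging in and using $\x^\top(\cdot)\x$ symmetrization gives
\begin{equation*}
0 = \x^\top\!\left(\Y_\infty^{-1}\A + \tfrac12\Y_\infty^{-1}\B\B^\top\Y_\infty^{-1} - \tfrac12(1-\gamma^{-2})\C^\top\C\right)\!\x
\end{equation*}
after symmetrizing the first term to $\tfrac12(\Y_\infty^{-1}\A + \A^\top\Y_\infty^{-1})$. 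Since this must hold for all $\x$ in a neighborhood of the origin, the symmetric matrix in parentheses vanishes; left- and right-multiplying by $\Y_\infty$ converts $\Y_\infty^{-1}\A\Y_\infty \to \Y_\infty\A^\top$ (wait—carefully: $\Y_\infty\,\Y_\infty^{-1}\A\,\Y_\infty = \A\Y_\infty$) and yields exactly \eqref{eq:HinftyRiccati1}. The analogous computation with $\cE_\gamma^{+}(\x) = \tfrac12\x^\top\X_\infty\x$ in the future-energy HJB equation gives \eqref{eq:HinftyRiccati2} directly, with the control-penalty scaling producing the factor $(1-\gamma^{-2})$ on the $\X_\infty\B\B^\top\X_\infty$ term. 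I would present the $\gamma>1$, $0<\gamma<1$ cases together: in the max-formulation \eqref{eq:HinftyenergyFunctions3} the sign of $1-\gamma^{-2}$ flips, but so does the sense of the optimization, and the resulting stationarity condition is the same ARE.

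Finally, I would justify that these quadratic functions are indeed the energy functions, not merely formal solutions: the hypothesis $\gamma > \gamma_0$ guarantees (by standard $\mathcal{H}_\infty$ theory, or by citing \cite{scherpen1996hinfty_balancing,glover91Hinftybalancing}) that the stabilizing positive definite solutions $\Y_\infty, \X_\infty$ of \eqref{eq:HinftyRiccati1}–\eqref{eq:HinftyRiccati2} exist and are unique, and that the corresponding closed-loop vector fields are asymptotically stable — which is precisely the solvability/finiteness condition (the analogue of the condition stated after \eqref{eq:regularcase2}) ensuring the minima in \eqref{eq:HinftyenergyFunctions1}–\eqref{eq:HinftyenergyFunctions2} are attained and finite. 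The main obstacle I anticipate is \textbf{bookkeeping of the $\gamma$-dependent scalar factors and signs}: matching $(1-\gamma^{-2})$ versus $(1-\gamma^{-2})^{-1}$ across the past/future definitions and reconciling the min-formulation with the max-formulation for $0<\gamma<1$ so that a single pair of AREs emerges. A secondary technical point is handling $\Y_\infty^{-1}$ cleanly — one should argue invertibility of $\Y_\infty$ up front (positive definiteness from the stabilizing-solution theory) before writing $\cE_\gamma^{-} = \tfrac12\x^\top\Y_\infty^{-1}\x$, or equivalently derive the Riccati equation for $\P := \Y_\infty^{-1}$ and then invert. Everything else is the same quadratic-form matching used to derive \eqref{eq:lyap} from \eqref{eq:regularcase1}–\eqref{eq:regularcase2}.
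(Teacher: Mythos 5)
Your derivation is correct and is the standard quadratic-ansatz argument; the paper does not actually prove this theorem (it is imported from \cite{scherpen1996hinfty_balancing} and \cite{glover91Hinftybalancing}), but the identical second-order computation appears in the proofs of Theorems~\ref{thm:wi} and~\ref{thm:vi}, where matching quadratic terms in the HJB equations yields \eqref{eq:W2} (i.e., \eqref{eq:HinftyRiccati2} with $\X_\infty=\W_2$) and \eqref{eq:V2}, which is converted to \eqref{eq:HinftyRiccati1} via $\Y_\infty=\V_2^{-1}$ exactly as you propose. Your deferral of the existence, uniqueness, and positive definiteness of the stabilizing solutions to the hypothesis $\gamma>\gamma_0$ matches how the paper treats that point as well.
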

\begin{remark}
While the exact computation of $\gamma_0$ is an open problem, we highlight two strategies to find the constant $\gamma_0$. The first strategy, drawing from \cite[Sec. 3]{scherpen1996hinfty_balancing}, is to solve the two $\mathcal{H}_\infty$ AREs while ensuring that $\lambda_\text{max}(\X_\infty \Y_\infty)<\gamma^2$. This  guarantees  a controller that solves the associated control problem. One continues to decrease $\gamma$ and finds the limit $\gamma_0$. However, this can be rather cumbersome. 
The second strategy is based on the fact that the constant $\gamma_0$ is the infimum of the $\cH_\infty$ norms of all closed-loop system transfer functions (see~\cite[Sec.3]{scherpen1996hinfty_balancing} for further details). Then, one can use the upper and lower bound in~\cite[Rm 4.9]{glover91Hinftybalancing}:
\begin{equation}
    0 < \hat{\gamma}_0 - 1 \leq \gamma_0 \leq \hat{\gamma}_0 + 1,
\end{equation}
where the $\hat{\gamma}_0 = \sqrt{1 + \lambda_\text{max} ( \X_\infty\Y_\infty) } >1$ and $\X_\infty,\Y_\infty$ are the solutions to the standard control and filter ARE's, i.e., \eqref{eq:HinftyRiccati1} and \eqref{eq:HinftyRiccati2} with $1-\gamma^{-2} = 1$. We follow this strategy to lower-bound $\gamma_0$, and subsequently assume $\gamma > \gamma_0 \geq \hat{\gamma}_0 - 1$.
\end{remark}


Having defined the energy functions, the next theorem shows that they can  be computed, as before, via the solution of HJB equations. 
\begin{theorem}\cite[Thm 5.2]{scherpen1996hinfty_balancing}
Assume that the HJB equation

\begin{equation} \label{eq:HJB-NLHinfty2}
 0  = \frac{\partial \cE_\gamma^{-}(\x)}{\partial \x} \f(\x) + \frac{1}{2}  \frac{\partial \cE_\gamma^{-}(\x)}{\partial \x} \g(\x) \g(\x)^\top \frac{\partial^\top \cE_\gamma^{-}(\x)}{\partial \x} - \frac{1}{2} (1-\gamma^{-2}) \h(\x)^\top  \h(\x)
\end{equation}
has a solution with $\cE_\gamma^{-}(\bzero) = 0$ that also satisfies  
$
    - \left ( \f(\x) +\g(\x) \g(\x)^\top \frac{\partial^\top \cE_\gamma^{-}(\x)}{\partial \x} \right ) 
$
is asymptotically stable. Then this solution is the past energy function $\cE_\gamma^{-}(\x)$ from \eqref{eq:HinftyenergyFunctions1}.
Furthermore, assume that the HJB equation
\begin{equation} \label{eq:HJB-NLHinfty1}
0  = \frac{\partial \cE_\gamma^{+}(\x)}{\partial \x} \f(\x) - \frac{1}{2} (1-\gamma^{-2}) \frac{\partial \cE_\gamma^{+}(\x)}{\partial \x} \g(\x) \g(\x)^\top \frac{\partial^\top \cE_\gamma^{+}(\x)}{\partial \x} \\
+ \frac{1}{2}\h(\x)^\top \h(\x) 
\end{equation}
has a solution with $\cE_\gamma^{+}(\bzero) = 0$ which satisfies that
$
    \f(\x) - (1-\gamma^{-2})\g(\x) \g(\x)^\top \frac{\partial^\top \cE_\gamma^{+}(\x)}{\partial \x} 
$
is asymptotically stable. Then this solution is the future energy function $\cE_\gamma^{+}(\x)$ from \eqref{eq:HinftyenergyFunctions2}. 
\end{theorem}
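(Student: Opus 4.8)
\noindent\emph{Proof plan.}
This is a nonlinear verification (Hamilton--Jacobi) statement, and I would prove it exactly as in Theorem~\ref{thm:Lukes}, the only new feature being that the running cost is now sign-indefinite. Write $W^{-}$ for the given solution of~\eqref{eq:HJB-NLHinfty2} with $W^{-}(\bzero)=0$ and $W^{+}$ for the given solution of~\eqref{eq:HJB-NLHinfty1} with $W^{+}(\bzero)=0$; the goal is to show that $W^{-}$ coincides with the value function defined in~\eqref{eq:HinftyenergyFunctions1} and that $W^{+}$ coincides with the one in~\eqref{eq:HinftyenergyFunctions2}/\eqref{eq:HinftyenergyFunctions3}. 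The algebraic engine is completion of squares in $\u$; I will carry the past-energy case out explicitly, the future-energy case being its mirror image.

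First I would fix $\x_0$, take an arbitrary admissible input $\u\in L_{2}(-\infty,0]$ (so its trajectory satisfies $\x(-\infty)=\bzero$, $\x(0)=\x_0$), set $p(\x):=\g(\x)^\top\frac{\partial^\top W^{-}(\x)}{\partial\x}\in\Real^{m}$, differentiate $W^{-}$ along the trajectory, and use~\eqref{eq:HJB-NLHinfty2} to eliminate $\frac{\partial W^{-}}{\partial\x}\f(\x)$. This rewrites the integrand of~\eqref{eq:HinftyenergyFunctions1} as a total derivative plus a perfect square,
\begin{align*}
\tfrac12(1-\gamma^{-2})\|\y\|^{2}+\tfrac12\|\u\|^{2}
={}&\frac{\mathrm{d}}{\mathrm{d}t}W^{-}(\x)\\
&+\tfrac12\big\|\u-p(\x)\big\|^{2}.
\end{align*}
Integrating over $(-\infty,0]$ and using $\x(-\infty)=\bzero$ and $W^{-}(\bzero)=0$, the cost in~\eqref{eq:HinftyenergyFunctions1} equals $W^{-}(\x_0)+\tfrac12\int_{-\infty}^{0}\|\u-p(\x)\|^{2}\,\mathrm{d}t\ge W^{-}(\x_0)$, with equality precisely when $\u$ equals the feedback $\u^{\star}=p(\x)$. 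Hence $\cE_\gamma^{-}(\x_0)\ge W^{-}(\x_0)$; since the $\|\u\|^{2}$-penalty in~\eqref{eq:HinftyenergyFunctions1} carries a fixed positive weight, this direction holds for every admissible $\gamma$, irrespective of the sign of $1-\gamma^{-2}$.

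The part that needs care is showing $W^{-}(\x_0)$ is actually attained, i.e.\ that the feedback $\u^{\star}$ generates an admissible trajectory. The closed loop under $\u^{\star}$ is $\dot\x=\f(\x)+\g(\x)\g(\x)^\top\frac{\partial^\top W^{-}(\x)}{\partial\x}$, whose time reversal is exactly the system $-\big(\f+\g\g^\top\tfrac{\partial^\top W^{-}}{\partial\x}\big)$ assumed asymptotically stable; thus the trajectory arriving at $\x_0$ at $t=0$ converges to $\bzero$ as $t\to-\infty$, local smoothness promotes this to exponential decay, and so $\u^{\star}$ and $\y$ lie in $L_{2}(-\infty,0]$ and the admissible class is nonempty. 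This yields $\cE_\gamma^{-}(\x_0)=W^{-}(\x_0)$ on the neighborhood in question. For the future energy I would repeat the computation with~\eqref{eq:HJB-NLHinfty1}, writing $q(\x):=\g(\x)^\top\frac{\partial^\top W^{+}(\x)}{\partial\x}$; completion of squares now gives
\begin{align*}
\tfrac12\|\y\|^{2}+\frac{\|\u\|^{2}}{2(1-\gamma^{-2})}
={}&-\frac{\mathrm{d}}{\mathrm{d}t}W^{+}(\x)\\
&+\frac{\big\|\u+(1-\gamma^{-2})q(\x)\big\|^{2}}{2(1-\gamma^{-2})},
\end{align*}
so integrating over $[0,\infty)$ with $\x(\infty)=\bzero$ shows the cost equals $W^{+}(\x_0)$ plus $\frac{1}{2(1-\gamma^{-2})}\int_{0}^{\infty}\|\u+(1-\gamma^{-2})q(\x)\|^{2}\,\mathrm{d}t$. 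For $\gamma>1$ the prefactor is positive, so the unique extremum $\u^{\star}=-(1-\gamma^{-2})q(\x)$ is a \emph{minimum} and reproduces the value in~\eqref{eq:HinftyenergyFunctions2}; for $0<\gamma<1$ it is negative, so the same $\u^{\star}$ is a \emph{maximum} and reproduces~\eqref{eq:HinftyenergyFunctions3} --- which is precisely why that definition switches from $\min$ to $\max$. Admissibility of the optimal trajectory ($\x(\infty)=\bzero$, $\u^{\star}\in L_{2}[0,\infty)$) follows, as before, from the hypothesized asymptotic stability of $\f-(1-\gamma^{-2})\g\g^\top\tfrac{\partial^\top W^{+}}{\partial\x}$.

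The main obstacle is therefore not the algebra --- completing the square is routine --- but this attainment step: certifying that the optimal-feedback trajectory is genuinely admissible and that the boundary term $W^{\pm}(\x(\mp\infty))$ vanishes, which is exactly the role of the two asymptotic-stability hypotheses together with sufficient regularity of $W^{\pm}$ near the origin. As a sanity check I would specialize to an LTI system with $W^{+}=\tfrac12\x^\top\X_\infty\x$ and $W^{-}=\tfrac12\x^\top\Y_\infty^{-1}\x$: then~\eqref{eq:HJB-NLHinfty2}--\eqref{eq:HJB-NLHinfty1} collapse to the $\cH_\infty$ algebraic Riccati equations~\eqref{eq:HinftyRiccati1}--\eqref{eq:HinftyRiccati2}, recovering Theorem~\ref{thm:HinftyARE}.
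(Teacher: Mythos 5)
The paper does not prove this statement---it is imported verbatim from \cite[Thm.~5.2]{scherpen1996hinfty_balancing}---but your verification argument (completion of squares in $\u$ along trajectories, eliminating $\frac{\partial W}{\partial\x}\f$ via the HJB equation, and invoking the two asymptotic-stability hypotheses to certify admissibility of the optimal-feedback trajectory and vanishing of the boundary terms) is precisely the proof given in that reference, and your algebra, including the sign analysis of the prefactor $1/(1-\gamma^{-2})$ that explains the $\min$/$\max$ switch between \eqref{eq:HinftyenergyFunctions2} and \eqref{eq:HinftyenergyFunctions3}, is correct. The only point you gloss over---that asymptotic stability of the closed loop must be upgraded to enough decay for $\u^{\star}$ and $\y$ to lie in $L_2$---is a genuine but standard local regularity issue that you correctly flag and that the original source treats in the same way.
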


Note, that for $(1-\gamma^{-2}) = -1$, i.e., for $\gamma = \sqrt{1/2}$,  \eqref{eq:HJB-NLHinfty2} and \eqref{eq:HJB-NLHinfty1} are the same, and so are their solutions:
$
\cE_{\gamma = \frac{1}{\sqrt{2}}}^{-}(\x) = \cE_{\gamma = \frac{1}{\sqrt{2}}}^{+}(\x).
$
We can also see this by dividing equation~\eqref{eq:HinftyenergyFunctions1} with $(1-\gamma^{-2})$ and reversing time. 
%
%
%
%
Moreover, under the assumption that the energy functions exist and are smooth, it has been shown in \cite[Thm 5.5. \& 5.7.]{scherpen1996hinfty_balancing} that the open-loop nonlinear energy functions $\cE_c(\x)$ and $\cE_o(\x)$  from~\eqref{eq:energyFunctions1}--\eqref{eq:energyFunctions2} can be obtained in the limit $\gamma\rightarrow 1$ of the $\mathcal{H}_\infty$ energy functions
$\cE_\gamma^{-}(\x)$ and $\cE_\gamma^{+}(\x)$, i.e., 
\begin{equation} \label{eq:limitto1}
\lim_{\gamma\rightarrow 1} \cE_\gamma^{-}(\x) = \cE_c(\x),
\quad 
\lim_{\gamma\rightarrow 1} \cE_\gamma^{+}(\x) = \cE_o(\x).
\end{equation}
%
{The two limits in \eqref{eq:limitto1} can be intuited: First, $\lim_{\gamma\rightarrow 1} \cE_\gamma^{-}(\x)$, 
$\gamma \rightarrow 1$ implies that
the term $(1-\gamma^{-2}) \rightarrow 0$, so the weighting of the output norm $\Vert \y\Vert$ vanishes in the integral \eqref{eq:HinftyenergyFunctions1}, resulting in the open loop energy function $\cE_c(\x)$.
Second, $\lim_{\gamma\rightarrow 1} \cE_\gamma^{+}(\x)$ in \eqref{eq:limitto1}, we need to consider the limit from above and below  since there are different definitions for the energy functions, see~\eqref{eq:HinftyenergyFunctions2}. In  $\lim_{\gamma\rightarrow 1^-} \cE_\gamma^{+}(\x)$, the factor $1/(1-\gamma^{-2})$ is negative, so to maximize the integral in \eqref{eq:HinftyenergyFunctions2}, we set $\Vert \u \Vert =0$, yielding the open loop energy function $\cE_o(\x)$. In $\lim_{\gamma\rightarrow 1^+} \cE_\gamma^{+}(\x)$, the factor $1/(1-\gamma^{-2})$ is positive, but this time we minimize the integral in \eqref{eq:HinftyenergyFunctions2}, right equation, and again $\Vert \u \Vert =0$ achieves that. Hence, the limit is also  $\cE_o(\x)$.
}

Since the HJB and standard balancing energy functions can be obtained as limits of the $\cH_\infty$ energy functions, 
we henceforth derive the computational framework for the most general case of $\cH_\infty$ energy functions.

\section{Computing energy functions via polynomial approximations} \label{sec:computingEF}
To develop a computationally scalable approach to solving HJB equations for nonlinear balanced truncation, we focus on polynomial nonlinear systems of the form	
\begin{align}
\dot{\x}(t) & = \A \x(t)  + \sum_{k=2}^\ell \F_k \kronF{\x}{k}(t) + \B \u(t), \label{eq:FOMx}\\
\y(t) &= \C\x(t),\label{eq:FOMy}
\end{align}
where $\A \in \Real^{n\times n}$ is the linear system matrix, $\F_k \in \Real^{n\times n^k}$ represents matricized higher-order tensors, $\B \in \Real^{n\times m}$ is an input matrix, and $\C \in \Real^{p\times n}$ is a linear measurement matrix. For a compact notation, we define the $k$-term Kronecker product of $\x$ as
\begin{equation}
\kronF{\x}{k}: = \underbrace{\x \otimes \ldots \otimes \x}_{k \ \text{times}}
\in \Real^{n^k}.
\end{equation}
For $\M\in\mathbb{R}^{q\times n}$ define the \textit{k-way Lyapunov matrix} or a special \textit{Kronecker sum} \cite{benzi2017approximation} matrix as
\begin{equation}
\label{eq:dWayLyapunov}
    \cL_k(\M) := \underbrace{\M \otimes \ldots \otimes \I_n}_{k \  \text{times}} + \cdots + \underbrace{\I_n \otimes \ldots \otimes \M}_{k \ \text{times}} \in \Real^{n^{k-1}q \times n^k}.
\end{equation}
To simplify the $\cH_\infty$ gain parameter notation, we introduce
\begin{align} 
\label{eq:eta}
    \eta := 1-\gamma^{-2}
\end{align}
so that we have 
\begin{align} 
\begin{split}
    \begin{matrix} 
        0<\gamma < 1            & \Rightarrow &  -\infty  <  \eta   <  0 \\ 
        1 \leq \gamma < \infty  & \Rightarrow &  0       \leq \eta \leq 1 
    \end{matrix} \ .
\end{split}
\end{align}

As we pointed out in the introduction, many complex systems have quadratic  drift, therefore we focus our derivation on the quadratic case, i.e., $\ell=2$ in \eqref{eq:FOMx}.
Since there is now only one term, $\F_2$ in \eqref{eq:FOMx}, we drop the subscript $k$ and replace the notation $\F_k$ with $\F$ from here out.
We also note that the assumptions of linear $\B$ and $\C$ and quadratic drift are merely used in this paper to obtain a scalable computation of the energy functions. Once the energy functions are obtained, they can be used to obtain a balancing transformation or a controller for general quadratic systems.
The algorithms developed in this paper are implemented in the \texttt{NLbalancing} repository \cite{NLbalancing}.

\subsection{Polynomials in Kronecker product form\label{sec:polynomials}} 
For convenience and to ensure a unique representation of the coefficients, we impose symmetry of our coefficients in all monomial terms in the energy functions.
\begin{definition}[Symmetric Coefficients\label{def:sym}] A monomial term with real coefficients $\w_d^\top \kronF{\x}{d}$ has {\em symmetric coefficients} if it satisfies
\begin{displaymath}
  \w_d^\top\! \left(\a_1 \otimes \a_2 \otimes \cdots \otimes \a_d\right) = \w_d^\top\! \left(\a_{i_1} \otimes \a_{i_2} \otimes \cdots \otimes \a_{i_d}\right),
\end{displaymath}
where the indices $\{ i_k \}_{k=1}^d$ are any  permutation of $1, \ldots, d$.
\end{definition}
This definition generalizes the definition of symmetry from matrices to tensors.  For example, requiring $\w_2^\top (\a\otimes \b) = \w_2^\top (\b\otimes \a)$ for any $\a$ and $\b$ is equivalent to $(\a^\top\otimes\b^\top)\w_2 = (\b^\top\otimes\a^\top)\w_2$.  Hence, using $\w_2 = \text{vec}(\W_2)$, we have $\b^\top \W_2 \a = \a^\top \W_2 \b$. Since these are real scalars, this implies $\W_2 = \W_2^\top$.

We also remark that any polynomial can be uniquely written in Kronecker product form with symmetric coefficients.  For example,
\begin{equation*}
  c_1 x_1^2 + c_2 x_1x_2 + c_3 x_2^2 = [x_1\ x_2] \left[\begin{array}{cc} c_1 & \frac{1}{2} c_2 \\ \frac{1}{2} c_2 & c_3 \end{array} \right] \left[ \begin{array}{c} x_1 \\ x_2 \end{array} \right]
  = \left[ c_1\ \ \frac{1}{2} c_2\ \ \frac{1}{2} c_2\ \  c_3\right] (\x\otimes \x). 
\end{equation*}
The same set of quadratic terms would be realized with the coefficient matrices corresponding to either $[c_1\ c_2\ 0\ c_3]$ or $[c_1\ 0\ c_2\ c_3]$.  However, the requirement of symmetry leads to a unique representation. Note that all coefficient vectors which define the same polynomial can be transformed into the unique symmetric representation by applying a symmetrization function  \cite{comon2008SymmetricTensorsandSymmetricTensorRank}. Computing the symmetrization consists of summing up the coefficients of each unique monomial term, dividing by the number of times it appears, then redistributing the results. This is a generalization of the symmetrization of a matrix. For an implementation of this, see the function \texttt{kronMonomialSymmetrize.m} in \cite{KronTools}.

We will assume that each row of {the coefficient matrix $\F = \F_2$ in \eqref{eq:FOMx}} is symmetric as in Definition \ref{def:sym}, and that the polynomial representations of the energy functions and controls share this symmetric representation.  Our algorithms are designed to ensure symmetry in the computed coefficients.

\subsection{Future energy function\label{sec:FEF}}
The future energy function in~\eqref{eq:HinftyenergyFunctions2} is a solution to equation \eqref{eq:HJB-NLHinfty1}. For  quadratic dynamics, i.e., $\ell = 2$ in~\eqref{eq:FOMx}, this equation becomes
\begin{equation} \label{eq:HinftyenergyFunctions1_Kron}
 0 =  \frac{\partial \cE_\gamma^{+}(\x)}{\partial \x} \left [ \A \x + \F(\x\otimes \x) \right ] - \frac{\eta}{2}  \frac{\partial \cE_\gamma^{+}(\x)}{\partial \x} \B \B^\top \frac{\partial^\top \cE_\gamma^{+}(\x)}{\partial \x} + \frac{1}{2}(\text{vec}(\C^\top \C))^\top (\x\otimes \x).
\end{equation}
%
We assume the future energy function is represented in the form (or is approximated as)
\begin{align}
\cE_\gamma^+(\x)  
& \approx \frac{1}{2}\left ( \w_2^\top \kronF{\x}{2}  + \w_3^\top \kronF{\x}{3} + \ldots + \w_d^\top \kronF{\x}{d} \right ) \label{eq:wi_coeffs}\\
& = \frac{1}{2}\left ( \w_2^\top + \tilde{\w}_3^\top (\x)+ \ldots + \tilde{\w}_d^\top (\x) \right )\kronF{\x}{2}, \label{eq:wi_coeffs_quad}
\end{align}
for some integer $d\geq 2$, which denotes the polynomial degree, and {$\w_k \in \Real^{n^k}$}. 
Equation~\eqref{eq:wi_coeffs} has the interpretation of an energy function with the constant coefficients $\w_i$ and higher-order polynomials. On the other hand, we can think of~\eqref{eq:wi_coeffs_quad} as a quadratic expansion with state-dependent coefficients. The latter representation is often used for Gramian-based model reduction that locally linearizes the $\tilde{\w}_i (\x)$, see, e.g., \cite{al1994new,condon2005nonlinear,verriest2006algebraicGramiansNLBT,bennerGoyal2017BT_quadBilinear}, and the references therein.
We note that polynomial energy function approximations have recently been adapted to solve optimal control problems for moderately sized bilinear systems \cite{breiten2018numerical,breiten2019taylor}, quadratic drift systems \cite{breiten2019feedbackNS,borggaard2019QQR}, and polynomial drift systems \cite{borggaard2021PQR,almubarak2019infinite}.
The next result shows how the coefficients
$\{\w_i\}_{i=2}^d$ in \eqref{eq:wi_coeffs} can be efficiently computed using tensor linear algebra tools.
\begin{theorem} \label{thm:wi}
Let $\gamma>\gamma_0\geq0$ hold as in  Theorem~\ref{thm:HinftyARE} and $\eta = 1-\gamma^{-2}$ as defined in~\eqref{eq:eta}.
Let the future energy function $\cE_\gamma^+(\x)$ for the quadratic system ($\ell =2$ and $\F_2 = \F$) in \eqref{eq:FOMx} be expanded as in \eqref{eq:wi_coeffs} 
with the coefficients $\w_i$ for $i=2, 3, \ldots, d$. Then,
$\w_2 = \text{vec}(\W_2)$ where $\W_2$ is the symmetric positive definite solution to the $\mathcal{H}_\infty$ Riccati equation
    \begin{equation} \label{eq:W2}
        \bzero = \A^\top \W_2 + \W_2\A + \C^\top \C - \eta \W_2 \B \B^\top \W_2.
    \end{equation}
    For $2<k\leq d$, let $\tw_k\in \Real^{n^k}$ solve the linear system
    \begin{equation}\label{eq:th6}
        \cL_k( (\A-\eta \B \B^\top\W_2)^\top)
        \tw_k = 
        - \cL_{k-1}(\F^\top) \w_{k-1} + \frac{\eta}{4}\sum_{\substack{i,j>2i+j=k+2}} \!\!\!\!ij~{\rm{vec}}(\W_i^\top \B \B^\top \W_j).
    \end{equation}
    Then,  the coefficient vector $\w_k = {\rm{vec}}(\W_k) \in \Real^{n^k}$, for $2<k\leq d$, is obtained by the symmetrization of $\tw_k$.
\end{theorem}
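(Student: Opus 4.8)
The plan is to substitute the ansatz~\eqref{eq:wi_coeffs} for $\cE_\gamma^+(\x)$ into the HJB equation~\eqref{eq:HinftyenergyFunctions1_Kron} and collect terms by total polynomial degree in $\x$. First I would compute the gradient $\frac{\partial \cE_\gamma^+}{\partial \x}(\x)$ term by term: for a monomial $\frac12\w_k^\top\kronF{\x}{k}$, using the symmetry of $\w_k$ from Definition~\ref{def:sym}, the gradient is $\frac{k}{2}\w_k^\top(\I_{n^{k-1}}\otimes\,\cdot\,)$ in a form that, when contracted against $\x$-Kronecker terms, produces the Kronecker-sum structure. I expect the three pieces of~\eqref{eq:HinftyenergyFunctions1_Kron} to contribute as follows: the linear drift term $\frac{\partial\cE_\gamma^+}{\partial\x}\A\x$ produces, at degree $k$, the term $\w_k^\top\cL_k(\A)\kronF{\x}{k}$ (equivalently $\cL_k(\A^\top)\w_k$ after vectorizing); the quadratic drift term $\frac{\partial\cE_\gamma^+}{\partial\x}\N(\x\otimes\x)$ shifts degree up by one, so its degree-$k$ contribution comes from $\w_{k-1}$ and gives $\cL_{k-1}(\N^\top)\w_{k-1}$; the output penalty $\frac12(\mathrm{vec}(\C^\top\C))^\top(\x\otimes\x)$ contributes only at degree $2$; and the quadratic-in-gradient term $-\frac{\eta}{2}\frac{\partial\cE_\gamma^+}{\partial\x}\B\B^\top\frac{\partial^\top\cE_\gamma^+}{\partial\x}$ is where two gradient factors multiply, producing cross terms $\w_i,\w_j$ with $i+j$ determined by the degree.

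Second, I would treat the degree-$2$ equation separately: there the drift contributes $\A^\top\W_2+\W_2\A$ (after recognizing $\w_2=\mathrm{vec}(\W_2)$, $\W_2=\W_2^\top$), the output penalty contributes $\C^\top\C$, and the quadratic gradient term contributes $-\eta\,\W_2\B\B^\top\W_2$ (since $\frac{\partial\cE_\gamma^+}{\partial\x}\approx\x^\top\W_2$ to leading order, and the $\frac12$ cancels against the factor $2$ from the two identical gradient factors). This yields exactly~\eqref{eq:W2}; existence of the symmetric positive definite stabilizing solution is guaranteed by Theorem~\ref{thm:HinftyARE} under $\gamma>\gamma_0$.

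Third, for $k>2$ I would isolate the degree-$k$ balance. The quadratic gradient term, written as $-\frac{\eta}{2}\big(\sum_i\frac{i}{2}\w_i^\top(\I\otimes\,\cdot\,)\cdots\big)\B\B^\top\big(\sum_j\cdots\big)$, splits into (i) the cross terms where exactly one factor is the $\w_2$ (quadratic) piece — these contribute, at degree $k$, the term $-k\eta(\I_{n^{k-1}}\otimes\W_2\B\B^\top)\w_k$, which moves to the left-hand side as the correction to $\cL_k(\A^\top)$ — and (ii) the terms where both factors have degree $>2$, with degrees $i,j\geq 3$ and (counting the two $\x$-blocks consumed) $i+j=k+2$, contributing $\frac{\eta}{4}\sum ij\,\mathrm{vec}(\W_i^\top\B\B^\top\W_j)$ on the right-hand side. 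Combined with $\cL_k(\A^\top)\w_k$ from the linear drift and $\cL_{k-1}(\N^\top)\w_{k-1}$ from the quadratic drift (moved to the right), this is precisely~\eqref{eq:th6}. I would also note that the right-hand side depends only on $\w_2,\ldots,\w_{k-1}$, so the system is solved recursively in $k$.

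The main obstacle is bookkeeping the combinatorics of the quadratic gradient term: correctly tracking how the $\frac{i}{2}$ and $\frac{j}{2}$ prefactors from differentiating degree-$i$ and degree-$j$ monomials combine (giving the $\frac{ij}{4}$ and, in the mixed $\w_2$ case, the $k\eta$ with its specific combinatorial factor), and verifying that the symmetry assumption on all the $\w_i$ is exactly what makes the contraction $\w_i^\top(\I\otimes\B\B^\top)(\cdots)\w_j$ collapse into the clean $\mathrm{vec}(\W_i^\top\B\B^\top\W_j)$ form after the Kronecker blocks are distributed over the various $\x$ factors. A secondary care point is confirming that the $\I_{n^{k-1}}\otimes\W_2\B\B^\top$ placement (rather than some symmetrized average over positions) is justified — this again rests on the symmetry of $\w_k$, which lets one fix a canonical slot for the $\W_2$ contraction. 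Once the degree-by-degree matching and these combinatorial constants are pinned down, the identification with~\eqref{eq:W2} and~\eqref{eq:th6} is immediate.
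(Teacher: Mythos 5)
Your proposal is correct and follows essentially the same route as the paper: expand the energy function, differentiate the Kronecker-monomial ansatz, substitute into the HJB equation, match terms degree by degree (degree 2 giving the $\mathcal{H}_\infty$ Riccati equation via Theorem~\ref{thm:HinftyARE}, degree $k>2$ giving the recursive linear systems), and use the symmetry of the coefficients to collapse the positional Kronecker sums into the $k\,\W_k$-type contractions that produce the $\frac{ij}{4}$ and $k\eta$ factors. The only cosmetic difference is that the paper works out $k=3$ and $k=4$ explicitly before invoking induction, whereas you state the general degree-$k$ balance directly; your identified ``care points'' (the combinatorial prefactors and the post-hoc symmetrization of $\w_k$, cf.\ Remark~\ref{rem:symmetry}) are exactly the ones the paper handles.
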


\begin{proof}
We start by observing that the derivative of the polynomial expansion~\eqref{eq:wi_coeffs} with respect to $\x$  is
\begin{align}
& \frac{\partial \cE_\gamma^{+}(\x)}{\partial \x} 
 = \frac{1}{2}\left (\w_2^\top (\I \otimes \x) + \w_2^\top (\x \otimes \I) \nonumber \right.+ \w_3^\top (\I \otimes \x \otimes \x) +  \w_3^\top (\x \otimes \I \otimes \x) + \w_3^\top (\x \otimes \x \otimes \I)  \\
&  + \w_4^\top (\I \otimes \x \otimes \x \otimes \x) + \w_4^\top ( \x \otimes \I \otimes \x \otimes \x) + \left. \w_4^\top ( \x \otimes \x \otimes \I \otimes  \x) + \w_4^\top ( \x \otimes  \x \otimes \x \otimes \I )+ \cdots \right ). \label{eq:HinftyEnergyFct_deriv} 
\end{align}
We insert the expansion \eqref{eq:HinftyEnergyFct_deriv} into the HJB equation \eqref{eq:HinftyenergyFunctions1_Kron} and subsequently multiply the resulting equation with a factor two. Collecting degree two terms in the resulting equation shows that  $\w_2$ solves 
\begin{align}
\begin{split} \label{eq:W2firsteq}
0 = &  \w_2^\top (\A \x \otimes \x) + \w_2^\top (\x \otimes \A \x) + \text{vec}(\C^\top \C)^\top (\x\otimes \x) \\
 & - \eta \left [ \frac{1}{2} (\w_2^\top (\I \otimes \x) + \w_2^\top (\x \otimes \I)) \right ] \B\B^\top \left [ \frac{1}{2} ((\I \otimes \x^\top)\w_2 + (\x^\top \otimes \I)\w_2) \right ] \\
= &  \left [\w_2^\top (\A \otimes \I) + \w_2^\top (\I \otimes \A) + \text{vec}(\C^\top \C)^\top \right ] (\x\otimes \x) \\
& - \eta \left [ \frac{1}{2} (\w_2^\top (\I \otimes \x) + \w_2^\top (\x \otimes \I)) \right ] \B\B^\top \left [\frac{1}{2} ( (\I \otimes \x^\top)\w_2 + (\x^\top \otimes \I)\w_2 )\right ].
\end{split}
\end{align}
Recall that $\w_2 = \text{vec}(\W_2)$, so that $(\I \otimes \x^\top)\w_2 = \text{vec}(\W_2^\top \x)$ and $(\x^\top \otimes \I)\w_2 = \text{vec}(\W_2 \x)$. We also have that\footnote{\textit{Kronecker-vec relationship}: For two matrices $\A,\B$, we have $(\B^\top \otimes \A) \text{vec}(\X) = \text{vec}(\A \X \B)$.}, $\w_2^\top [\A\otimes \I] = [[\A^\top \otimes \I]\text{vec}(\W_2)]^\top = \text{vec}(\W_2\A)^\top$ and likewise $\w_2^\top [\I\otimes \A] = [(\I \otimes \A^\top)\text{vec}(\W_2)]^\top = \text{vec}(\A^\top \W_2)^\top$. Moreover, for a given matrix $\M$ we have  $\y^\top \M \x = \text{vec}(\M^\top) ^\top (\x \otimes \y)$ so that we can simplify \eqref{eq:W2firsteq} to 
\begin{equation}
0   = \text{vec} \left ( \A^\top \W_2 + \W_2\A + \C^\top \C \right )^\top (\x\otimes \x) - \eta \text{vec} ( \left [\frac{1}{2} (\W_2 + \W_2^\top) \right ] \B\B^\top \left [ \frac{1}{2} (\W_2 + \W_2^\top) \right ] )^\top (\x\otimes \x),
\end{equation}
which can be written compactly in the matrix form as a Riccati equation:
\begin{equation}
\bzero = \A^\top \W_2 + \W_2\A + \C^\top \C  -\eta \left [\frac{1}{2} (\W_2 + \W_2^\top) \right ] \B \B^\top \left [ \frac{1}{2} (\W_2 + \W_2^\top) \right ].
\end{equation}
The solution to this Riccati equation is symmetric, $\W_2 = \W_2^\top$ since the last two terms on the right-hand side are symmetric, namely $\A^\top \W_2 + \W_2 \A = \W_2^\top \A + \A^\top \W_2^\top$. Therefore, 
\begin{equation} \label{eq:riccatiforW2}
\bzero = \A^\top \W_2 + \W_2\A + \C^\top \C - \eta \W_2 \B \B^\top \W_2,
\end{equation}
which is the common $\mathcal{H}_\infty$ Riccati equation from~\eqref{eq:HinftyRiccati2}. Since we assumed that $\gamma>\gamma_0$,~\eqref{eq:riccatiforW2} is uniquely solvable for a positive definite $\W_2$, following Theorem~\ref{thm:HinftyARE}.
Having computed $\w_2$, we proceed as before, i.e.,
 insert \eqref{eq:HinftyEnergyFct_deriv} into \eqref{eq:HinftyenergyFunctions1_Kron} and  multiply the resulting equation with a factor two. Now,
we collect degree three terms to obtain

\vspace{-2ex}
\begin{align}
\nonumber
 0  & =  \w_2^\top (\F(\x\otimes \x) \otimes \x) + \w_2^\top (\x \otimes \F(\x\otimes \x)) + \w_3^\top \left [ (\A \x \otimes \x \otimes \x) +  (\x \otimes \A \x \otimes \x) +(\x \otimes \x \otimes \A \x) \right]   \\
\nonumber
& - \frac{\eta}{4} \left [ \w_2^\top (\I \otimes \x) + \w_2^\top (\x \otimes \I) \right ] \B \B^\top \left [ \w_3^\top \left ( (\I \otimes \x \otimes \x) +  (\x \otimes \I \otimes \x) + (\x \otimes \x \otimes \I)\right ) \right ]^\top \nonumber \\
& - \frac{\eta}{4} \left [ \w_3^\top \left ( (\I \otimes \x \otimes \x) +   (\x \otimes \I \otimes \x) +  (\x \otimes \x \otimes \I) \right ) \right ] \B \B^\top \left [ (\I \otimes \x^\top)\w_2 + (\x^\top \otimes \I)\w_2 \right ] \nonumber\\
\nonumber
& = \left [\w_2^\top \cL_2(\F)  \right ] \kronF{\x}{3} + \left [\w_3^\top \cL_3(\A) \right ] \kronF{\x}{3} \\
& - \frac{\eta}{2} \left [ \w_2^\top (\I \otimes \x) + \w_2^\top (\x \otimes \I) \right ] \B \B^\top \left [ \w_3^\top (\I \otimes \x \otimes \x) +  \w_3^\top (\x \otimes \I \otimes \x) + \w_3^\top (\x \otimes \x \otimes \I) \right ]^\top.
\label{eq:W3_pre}
\end{align}

Next, we seek to compute the unique vector $\w_3$ which satisfies \eqref{eq:W3_pre} as well as the symmetry condition established in Definition \ref{def:sym}. We achieve this goal by first computing a vector $\tw_3 \in \Real^{n^3}$, which satisfies
\begin{align}
\nonumber
 0& = \left [\w_2^\top \cL_2(\F)  \right ] \kronF{\x}{3} + \left [\tw_3^\top \cL_3(\A) \right ] \kronF{\x}{3} \nonumber \\
& - \frac{\eta}{2} \left [ \w_2^\top (\I \otimes \x) + \w_2^\top (\x \otimes \I) \right ] \B \B^\top \left [ \tw_3^\top (\I \otimes \x \otimes \x) +  \tw_3^\top (\x \otimes \I \otimes \x) + \tw_3^\top (\x \otimes \x \otimes \I) \right ]^\top
\label{eq:tW3_pre}
\end{align}
and is non-symmetric in general. Then $\w_3$ is obtained by symmetrizing $\tw_3$.  
First, we simplify \eqref{eq:tW3_pre} by using the fact that $\W_2$ is a symmetric matrix
and write
\begin{equation}
\label{eq:W2B}
   \left [ \w_2^\top (\I \otimes \x) + \w_2^\top (\x \otimes \I) \right ] = \x^\top(\W_2+\W_2^\top) = 2 \x^\top\W_2.
\end{equation}
To rewrite the last line of \eqref{eq:tW3_pre} we introduce perfect shuffle matrices $\S_{n,n^2},\S_{n^2,n} \in \Real^{n^3 \times n^3}$ along the lines of \cite{loan2000TheUbiquitousKroeneckerProduct}, which satisfy
\begin{equation*}
    (\x \otimes \x \otimes \I) = \S_{n,n^2} (\x \otimes \I \otimes \x ) = \S_{n^2,n} (\I \otimes \x \otimes  \x ),
\end{equation*}
as well as $\S_{n,n^2}^\top = \S_{n^2,n}$. The above equation allows us to write
\begin{equation}
    \tw_3^\top (\I \otimes \x \otimes \x) +  \tw_3^\top (\x \otimes \I \otimes \x) + \tw_3^\top (\x \otimes \x \otimes \I) 
    = \tw_3^\top \left[ (\I + \S_{n,n^2} + \S_{n^2,n}) (\x \otimes \x \otimes \I) \right]. \label{eq:w3_sym}
\end{equation}
We introduce
\begin{equation*}
    \tW_3 = \unvec\left[ (\I + \S_{n^2,n} + \S_{n,n^2}) \tw_3 \right] \in \Real^{n \times n^2},
\end{equation*}\
which yields
\begin{equation} 
   \left[ (\I\otimes \x^\top \otimes \x^\top) + (\x^\top \otimes \I \otimes \x^\top) + (\x^\top \otimes \x^\top \otimes \I) \right ] \tw_3 =  \left[ (\x^\top \otimes \x^\top \otimes \I) (\I + \S_{n^2,n} + \S_{n,n^2}) \right]\tw_3 =  \tW_3 (\x\otimes\x).\label{eq:BtW3}
\end{equation}
The last term can be simplified by again resorting to the identity $\y^\top \M \x = \text{vec}(\M^\top)^\top (\x \otimes \y)$. Taken together, equation \eqref{eq:tW3_pre} has the form
\begin{equation}
    \left [\w_2^\top \cL_2(\F)  \right ] \kronF{\x}{3} + \left [\tw_3^\top \cL_3(\A) \right ] \kronF{\x}{3} - \eta \mbox{vec}\left( \tW_3^\top \B\B^\top \W_2 \right)^\top \kronF{\x}{3} = \bzero. 
    \label{eq:W3}
\end{equation}
Expanding the last term once more results in
\begin{equation*}
    \mbox{vec}\left( \tW_3^\top \B\B^\top \W_2 \right)^\top \kronF{\x}{3} ~= \tw_3^\top (\I + \S_{n,n^2} + \S_{n^2,n}) (\x \otimes \x \otimes \B \B^\top\W_2\x) ~= \tw_3^\top \cL_3( \B \B^\top\W_2)\kronF{\x}{3}, 
\end{equation*}
using the arguments similar to \eqref{eq:w3_sym}. 
We can now solve \eqref{eq:W3} for $\tw_3$ via
\begin{equation} \label{eq:nowW3}
  \cL_3((\A - \eta \B\B^\top\W_2)^\top)\tw_3  = -\cL_2({\F^\top})\w_2
\end{equation}
and obtain $\w_3$ by symmetrizing $\tw_3$. 
We repeat this procedure by collecting the degree four terms to obtain
\begin{align}
\nonumber
0 & = \w_3^\top (\F(\x\otimes \x) \otimes \x \otimes \x)   + \w_3^\top (\x \otimes \F(\x\otimes \x) \otimes \x) + \w_3^\top (\x \otimes \x \otimes \F(\x\otimes \x)) \nonumber\\  & + \w_4^\top (\A\x \otimes \x \otimes \x \otimes \x)  + \w_4^\top ( \x \otimes \A\x \otimes \x \otimes \x)   + \w_4^\top ( \x \otimes \x \otimes \A\x \otimes  \x)  + \w_4^\top ( \x \otimes  \x \otimes \x \otimes \A\x ) \nonumber\\
& - \eta \left ( \frac{1}{4}\w_3^\top \left [\I \otimes \x \otimes \x +  \x \otimes \I \otimes \x + \x \otimes \x \otimes \I \right ]  \B\B^\top  [ \I \otimes \x^\top \otimes \x^\top + \x^\top \otimes \I \otimes \x^\top + \x^\top \otimes \x^\top \otimes \I] \w_3  \right ) \nonumber \\
& - 2\eta \left( \frac{1}{4}\w_4^\top [\I \otimes \x \otimes \x \otimes \x + \x \otimes \I \otimes \x \otimes \x + \x \otimes \x \otimes \I \otimes  \x + \x \otimes  \x \otimes \x \otimes \I] \B\B^\top   ((\I \otimes \x^\top + \x^\top \otimes \I)\w_2) \right) \nonumber   \\
& = [\w_3^\top \cL_3(\F) + \tw_4^\top \cL_4 (\A)] \kronF{\x}{4} - \frac{9\eta}{4} \left ( (\x^\top \otimes\x^\top) \W_3^\top \B  \B^\top\W_3 (\x\otimes\x)\right ) - \eta \left ( (\kronF{\x}{3})^\top \tW_4^\top \B  \B^\top \W_2 \x  \right ),
\label{eq:W4_pre}
\end{align}
where the last equality follows from the fact that $\w_3$ is symmetric as well as the definition
\begin{equation*}
    \tW_4 = \unvec\left[ (\I + \S_{n^3,n} + \S_{n^2,n^2} + \S_{n,n^3}) \tw_4 \right] \in \Real^{n \times n^3}.
\end{equation*}
We  now use  $\x^\top \M \x = \text{vec}(\M^\top)^\top (\x \otimes \x)$ again, transpose the linear system \eqref{eq:W4_pre} and equate the coefficients to obtain
\begin{equation}
\bzero = \cL_3(\F^\top) \w_3 + \cL_4 (\A^\top)\tw_4- \frac{9\eta}{4}  \text{vec}(\W_3^\top \B \B^\top \W_3) - \eta \text{vec} (\W_2^\top \B \B^\top \tW_4). 
\end{equation}
Then using 
\begin{equation*}
    {\rm vec}( \tW_4^\top\B\B^\top\W_2)^\top \kronF{\x}{4} = \tw_4^\top (\I + \S_{n^3,n} + \S_{n^2,n^2}  
    + \S_{n,n^3})(\I_{n^3} \otimes  \B \B^\top\W_2) \kronF{\x}{4} = \tw_4^\top \cL_4(\B\B^\top\W_2) \kronF{\x}{4},
\end{equation*}
we have the explicit computation of the degree four terms as a solution to the linear system
\begin{equation}
\cL_4 ((\A - \eta \B \B^\top\W_2)^\top) \tw_4 = - \cL_3(\F^\top) \w_3 + \frac{9\eta}{4} \text{vec}(\W_3^\top \B \B^\top \W_3).
\end{equation}
One can proceed similarly and use induction to show that  for $2<k\leq d$, the coefficient $\tw_k$ can be computed by solving the linear system 
\begin{equation}
  \cL_k((\A - \eta \B \B^\top\W_2)^\top)
  \tw_k = - \cL_{k-1}(\F^\top) \w_{k-1} + \frac{\eta}{4}\sum_{\substack{i,j>2\\ i+j=k+2}} ij~\text{vec}(\W_i^\top \B \B^\top \W_j),
\end{equation}
and $\w_k$ results from symmetrizing $\tw_k$.
\end{proof}
%

\subsection{Past energy function}
Recall that the past energy function $\cE_\gamma^-(\x)$ from \eqref{eq:HinftyenergyFunctions1} is a solution to equation~\eqref{eq:HJB-NLHinfty2}. For quadratic dynamics, i.e., $\ell=2$ in \eqref{eq:FOMx}, equation~\eqref{eq:HJB-NLHinfty2}  becomes 
\begin{equation} 
\label{eq:HinftyenergyFunctions2_Kron}
    0  = \frac{\partial \cE_\gamma^{-}(\x)}{\partial \x} \left [ \A \x + \F(\x\otimes \x) \right ] + \frac{1}{2}  \frac{\partial \cE_\gamma^{-}(\x)}{\partial \x} \B \B^\top \frac{\partial^\top \cE_\gamma^{-}(\x)}{\partial \x} - \frac{\eta}{2}(\text{vec}(\C^\top \C))^\top (\x\otimes \x).
\end{equation}
We follow a similar procedure as in the previous section to compute a polynomial approximation to the energy function  as
\begin{equation} \label{eq:pastenerexp}
    \cE_\gamma^-(\x)  \approx \frac{1}{2}\left ( \v_2^\top \kronF{\x}{2}  + \v_3^\top \kronF{\x}{3} + \cdots \v_d^\top \kronF{\x}{d} \right ).
\end{equation}
where {$\v_k \in \Real^{n^k}$.}
The next theorem illustrates how the coefficients $\{\v_i\}_{i=2}^d$ in
\eqref{eq:pastenerexp} are computed.
\begin{theorem} \label{thm:vi}
Let $\gamma>\gamma_0\geq0$. Let the past energy function $\cE_\gamma^-(\x)$ for the quadratic system ($\ell =2$ and $\F_2 = \F$) in \eqref{eq:FOMx}
be expanded as in \eqref{eq:pastenerexp} with 
   the coefficients $\v_i$ for  $i=2, 3, \ldots, d$.
   Then,  $\v_2 = {\rm{vec}}(\V_2)$ where $\V_2$ is the {symmetric positive definite} solution to the 
   $\mathcal{H}_\infty$ Riccati equation
    \begin{equation}  \label{eq:V2}
        \bzero = \A^\top \V_2 + \V_2\A-\eta \C^\top \C + \V_2 \B \B^\top \V_2.
    \end{equation}
 For $2<k\leq d$,   let let $\tv_k\in \Real^{n^k}$ solve the linear system
    \begin{equation}
        \label{eq:LinSysforvk}
        \cL_k((\A+ \B \B^\top\V_2)^\top)
        \tv_k = - \cL_{k-1}(\F^\top) \v_{k-1} - \frac{1}{4}\sum_{\substack{i,j>2\\ i+j=k+2}} ij~{\normalfont \text{vec}}(\V_i^\top \B \B^\top \V_j).
    \end{equation}
    Then,  the coefficient vector $\v_k = {\rm{vec}}(\V_k) \in \Real^{n^k}$, for $2<k\leq d$, is obtained by the symmetrization of $\tv_k$.
\end{theorem}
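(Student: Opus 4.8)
The plan is to mirror the proof of Theorem~\ref{thm:wi} almost line for line, tracking only the sign changes that distinguish the past-energy HJB \eqref{eq:HinftyenergyFunctions2_Kron} from the future-energy HJB \eqref{eq:HinftyenergyFunctions1_Kron}: in \eqref{eq:HinftyenergyFunctions2_Kron} the feedback term $\tfrac{\partial \cE_\gamma^-}{\partial \x}\B\B^\top\tfrac{\partial^\top \cE_\gamma^-}{\partial \x}$ enters with coefficient $+\tfrac12$ instead of $-\tfrac{\eta}{2}$, and the output term $\text{vec}(\C^\top\C)^\top(\x\otimes\x)$ enters with coefficient $-\tfrac{\eta}{2}$ instead of $+\tfrac12$. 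Because $\C$ is linear, $\h(\x)^\top\h(\x)=\x^\top\C^\top\C\x$ is purely quadratic and only touches the degree-two equation. Concretely, I would insert the ansatz \eqref{eq:pastenerexp} and its gradient (the analogue of \eqref{eq:HinftyEnergyFct_deriv}) into \eqref{eq:HinftyenergyFunctions2_Kron}, multiply by $2$, and equate homogeneous terms of each polynomial degree.

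For the degree-two part, collecting the quadratic terms and using, exactly as in the passage \eqref{eq:W2firsteq}--\eqref{eq:W2}, the Kronecker--vec identities $(\I\otimes\x^\top)\v_2=\text{vec}(\V_2^\top\x)$, $(\x^\top\otimes\I)\v_2=\text{vec}(\V_2\x)$, $\v_2^\top(\A\otimes\I)=\text{vec}(\V_2\A)^\top$, $\v_2^\top(\I\otimes\A)=\text{vec}(\A^\top\V_2)^\top$, the symmetrization $\v_2^\top(\I\otimes\x)+\v_2^\top(\x\otimes\I)=\x^\top(\V_2+\V_2^\top)$ (cf.\ \eqref{eq:W2B}), and $\x^\top\M\x=\text{vec}(\M^\top)^\top(\x\otimes\x)$, one arrives at $\text{vec}(\A^\top\V_2+\V_2\A-\eta\C^\top\C+\V_2\B\B^\top\V_2)^\top(\x\otimes\x)=\bzero$ for all $\x$, hence \eqref{eq:V2}. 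Symmetry of $\V_2$ is automatic (the other three summands already are), and under $\gamma>\gamma_0$ the equation has a unique symmetric positive definite stabilizing solution: this follows from Theorem~\ref{thm:HinftyARE} applied to \eqref{eq:HinftyRiccati1}, since multiplying \eqref{eq:HinftyRiccati1} on both sides by $\Y_\infty^{-1}$ converts it into \eqref{eq:V2} with $\V_2=\Y_\infty^{-1}$.

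For degree $k>2$ I would induct on $k$, with $\v_2,\dots,\v_{k-1}$ already in hand. Collecting the degree-$k$ homogeneous part, exactly as in \eqref{eq:W3_pre} and \eqref{eq:W4_pre}, yields three contributions: $\v_k^\top\cL_k(\A)\kronF{\x}{k}$ from $\tfrac{\partial \cE_\gamma^-}{\partial \x}\A\x$; $\v_{k-1}^\top\cL_{k-1}(\N)\kronF{\x}{k}$ from $\tfrac{\partial \cE_\gamma^-}{\partial \x}\N(\x\otimes\x)$; and the feedback term, which after applying the symmetrization identities $\v_k^\top[(\I\otimes\x\otimes\cdots\otimes\x)+\cdots]=k\,(\kronF{\x}{k-1})^\top\V_k^\top$ (the generalizations of \eqref{eq:BtW3} and \eqref{eq:w4first}) and $\x^\top\M\x=\text{vec}(\M^\top)^\top(\x\otimes\x)$ splits into the $\v_k$-bearing piece $k\,\text{vec}(\V_k^\top\B\B^\top\V_2)^\top\kronF{\x}{k}$ and the residual cross terms $\tfrac14\sum_{i,j>2,\,i+j=k+2}ij\,\text{vec}(\V_i^\top\B\B^\top\V_j)^\top\kronF{\x}{k}$. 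Moving the $\v_k$-piece to the left using $\text{vec}(\V_2\B\B^\top\V_k)=(\I_{n^{k-1}}\otimes\V_2\B\B^\top)\v_k$ and transposing the whole identity gives \eqref{eq:LinSysforvk}; the feedback term's coefficient is $+k$ (not $-k\eta$) and the sum carries $-\tfrac14$ (not $+\tfrac{\eta}{4}$) precisely because its coefficient in the HJB changed from $-\eta$ to $+1$. One then symmetrizes $\v_k$ as in Remark~\ref{rem:symmetry}.

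The main obstacle is twofold. The routine-but-delicate part is the combinatorial bookkeeping: verifying that the permutation counts and the weights $ij$ reproduce those of the future-energy computation so that only the advertised sign changes remain. The more substantive point is the unique solvability of \eqref{eq:LinSysforvk}, i.e.\ the nonsingularity of $\cL_k(\A^\top)+k(\I_{n^{k-1}}\otimes\V_2\B\B^\top)$. On the subspace of symmetric coefficient tensors, where the solution lives, this operator acts as $\cL_k\!\big(\A^\top+\V_2\B\B^\top\big)$, which is nonsingular whenever $\A+\B\B^\top\V_2$ admits no $k$-tuple of eigenvalues summing to zero. This is guaranteed by the asymptotic-stability hypothesis attached to the past-energy HJB \eqref{eq:HJB-NLHinfty2}: its linearization at the origin states that $-(\A+\B\B^\top\V_2)$ is Hurwitz, so $\A^\top+\V_2\B\B^\top$ has spectrum in the open right half-plane and $\cL_k$ of it — whose eigenvalues are $k$-fold sums of such — is nonsingular. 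For $\eta\ge0$ the same conclusion also follows directly from \eqref{eq:V2}, which yields $\V_2(\A+\B\B^\top\V_2)+(\A+\B\B^\top\V_2)^\top\V_2=\eta\C^\top\C+\V_2\B\B^\top\V_2\succeq\bzero$ with $\V_2\succ\bzero$.
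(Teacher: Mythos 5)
Your proposal is correct and follows essentially the same route as the paper's own (sketched) proof: mirror the derivation of Theorem~\ref{thm:wi} term by term, identify the degree-two part with the Riccati equation \eqref{eq:V2} and relate it to \eqref{eq:HinftyRiccati1} via $\V_2=\Y_\infty^{-1}$, and obtain \eqref{eq:LinSysforvk} by matching degree-$k$ terms with the symmetrization identities. Your closing discussion of the unique solvability of \eqref{eq:LinSysforvk} via the spectrum of $\A+\B\B^\top\V_2$ goes beyond what the paper records and is a welcome addition, though the assertion that the operator ``acts as'' $\cL_k(\A^\top+\V_2\B\B^\top)$ on symmetric coefficients holds only after composing with the symmetrization projection.
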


\begin{proof} 
The proof follows analogously to that of Theorem~\ref{thm:wi}. 
Therefore, for brevity, we only give a sketch of the proof in this case.   
We start by taking the derivative of the expansion \eqref{eq:pastenerexp}: 
\begin{align}  \label{eq:HinftyEnergyFct2_deriv}
\frac{\partial \cE_\gamma^{-}(\x)}{\partial \x} 
 = & \frac{1}{2}\left (\v_2^\top (\I \otimes \x) + \v_2^\top (\x \otimes \I) + \v_3^\top (\I \otimes \x \otimes \x) +  \v_3^\top (\x \otimes \I \otimes \x) + \v_3^\top (\x \otimes \x \otimes \I) \right.  \nonumber \\
&  \left. + \v_4^\top (\I \otimes \x \otimes \x \otimes \x) + \v_4^\top ( \x \otimes \I \otimes \x \otimes \x) + \v_4^\top ( \x \otimes \x \otimes \I \otimes  \x) + \v_4^\top ( \x \otimes  \x \otimes \x \otimes \I )+ \cdots \right ). 
\end{align}
Following the same steps in the proof of Theorem~\ref{thm:wi}, we insert 
\eqref{eq:HinftyEnergyFct2_deriv} into \eqref{eq:HinftyenergyFunctions2_Kron}
and match the quadratic terms to obtain the Riccati equation
\begin{equation} 
 \bzero = \A^\top \V_2 + \V_2\A  -\eta \C^\top \C + \V_2 \B \B^\top \V_2.
\end{equation}
This last equation does not have the form of the standard $\cH_\infty$ Riccati equation~\eqref{eq:HinftyRiccati1}. Recall the linear case in Theorem~\ref{thm:HinftyARE} where the past energy function is 
$
\cE^{-}_\gamma(\x) = \frac{1}{2} \x^\top \Y_\infty^{-1} \x. 
$
Based on the way we define the polynomial coefficients $\v_i$, and hence the matrix $\V_2$, the quadratic part of the energy function becomes
$
\cE^{-}_\gamma(\x) = \frac{1}{2} \x^\top \V_2 \x. 
$
Therefore, the matrix $\V_2$ that we compute is related to $\Y_\infty$ in~\eqref{eq:HinftyRiccati1} via the relation
$$
 \Y_\infty = \V_2^{-1}.
$$
We now post- and premultiply \eqref{eq:V2} by $\V_2^{-1}$ to obtain
\begin{equation} \label{eq:V2inverse}
 \bzero =  \V_2^{-1} \A^\top  + \A \V_2^{-1} - 
 \eta\V_2^{-1} \C^\top \C\V_2^{-1} + \B \B^\top,
\end{equation}
which is consistent with the usual form of the $\mathcal{H}_\infty$ Riccati equation in~\eqref{eq:HinftyRiccati1}. Following Theorem~\ref{thm:HinftyARE}, for $\gamma>\gamma_0$ we conclude that $\Y_\infty = \V_2^{-1}$ is the stabilizing symmetric positive definite solution to \eqref{eq:V2inverse} and therefore $\V_2$ is a symmetric positive definite solution to \eqref{eq:V2}, which proves the result for $\v_2$.
We continue by matching the cubic terms
\begin{equation}\label{eq:V3}
   \cL_3({\A^\top}+\V_2\B\B^\top)\tv_3  = -\cL_2({\F^\top})\v_2,
\end{equation}
and obtain $\v_3$ by symmetrizing $\tv_3$. Likewise the quartic terms yield
\begin{equation*}
    \cL_4 (\A^\top+\V_2 \B \B^\top)\tv_4 = - \cL_3(\F^\top) \v_3 - \frac{9}{4} \text{vec}(\V_3^\top \B \B^\top \V_3).
\end{equation*}
Then, an induction argument leads to equation~\eqref{eq:LinSysforvk}
for $\tv_k$ for $2<k\leq d$, completing the proof.
\end{proof}

We note that the $\eta$ terms only enter in the $\V_2$ computation, but do not explicitly enter into the formula for the higher-degree $\v_k$ for  $2<k\leq d$. This is to be expected since  \eqref{eq:HinftyenergyFunctions2_Kron} has the $\eta$ term in front of a quadratic polynomial, which enters only the computation of $\V_2$.

\begin{remark}
The theory of nonlinear energy functions and their balancing is formulated locally around the origin, see Section~\ref{sec:NLBal}. Our assumptions produce positive definite solutions to the algebraic Riccati equations, so the polynomial approximations we compute are guaranteed to be positive in a neighborhood of the origin, and may become negative outside that region. When using the polynomial energy functions for balancing and control, one should therefore restrict the region to where the energy function approximations are both positive and well approximated.
\end{remark}
\subsection{Solvability of the Linear Systems}
A natural question that arises from Theorems \ref{thm:wi} and \ref{thm:vi} is when the linear systems of equations \eqref{eq:th6} and \eqref{eq:LinSysforvk} have a unique solution. The following theorem shows that the conditions in the nonlinear setting we consider here are in line with those of linear $\cH_\infty$-balancing. 
\begin{theorem}
Let $\gamma>\gamma_0\geq0$ hold as in  Theorem~\ref{thm:HinftyARE} and $\eta = 1-\gamma^{-2}$ as defined in~\eqref{eq:eta}. Then, for any $k=1,\ldots,d$ the matrices $\cL_k((\A - \eta \B \B^\top\W_2)^\top)$ and $\cL_k((\A + \B \B^\top\V_2)^\top)$ are invertible, thus equations \eqref{eq:th6} and \eqref{eq:LinSysforvk} have unique solutions.
\end{theorem}
\begin{proof}
In this proof we resort to a result from \cite{horn1994topics} that states that for any matrix $\M \in \Real^{n \times n}$ the spectrum of $\cL_k(\M)$, {denoted by $\Lambda\left(\cL_k(\M)\right)$}, is given by all possible sums of $k$ eigenvalues of $\M$, i.e.,
{\begin{equation*}
\Lambda\left(\cL_k(\M)\right) = \left\{\sum_{i \in \mathcal{P}_k} \lambda_i: \lambda_i \in \Lambda(\M) \right\},
\end{equation*}
where $\mathcal{P}_k$ denotes the set of all possible selection of $k$-indices from the set $\{1,2,\ldots,n\}$.}
To prove the first statement we set $\M = (\A - \eta \B \B^\top\W_2)^\top$, where $\W_2$ is the unique stabilizing solution of \eqref{eq:HinftyRiccati2}. This means that all eigenvalues of $\M$ are contained in the open left-half plane. Therefore, all eigenvalues of $\cL_k(\M)$ are contained in the open left-half plane as well, thus $\cL_k(\M)$ is invertible and the $\tw_k$ can be uniquely determined. For the second statement we first rearrange \eqref{eq:HinftyRiccati1} as
\begin{equation*}
 \A^\top + \V_2 \B \B^\top =  - \V_2\A\V_2^{-1} +
 \eta \C^\top \C\V_2^{-1}.
\end{equation*}
We can thus write
\begin{equation*}
 \cL_k((\A + \B \B^\top\V_2)^\top) = \cL_k(- \V_2\A\V_2^{-1} +
 \eta \C^\top \C\V_2^{-1}) = - \kronF{\left[ \V_2 \right]}{k} \cL_k(\A -
 \eta \V_2^{-1} \C^\top \C) \kronF{\left[ \V_2^{-1} \right]}{k}.
\end{equation*}
Considering that $\V_2^{-1}$ is the unique stabilizing positive definite solution to \eqref{eq:HinftyRiccati1}, this shows that all eigenvalues of $\cL_k(\A -
 \eta \V_2^{-1} \C^\top \C)$ are contained in the open left-half plane as well as that $\kronF{\left[ \V_2^{-1} \right]}{k}$ is invertible. This shows that $\cL_k((\A + \B \B^\top\V_2)^\top)$ is the product of invertible matrices, which concludes the proof. 
\end{proof}

\subsection{Complete algorithm}
In Algorithm~\ref{algo:energy_functions}, we provide a complete algorithm for the tensor-based computation of the energy functions assuming a quadratic system ($\ell =2$ and $\F_2 = \F$) in \eqref{eq:FOMx}  and constant input and measurement matrices, summarizing our results from Theorems \ref{thm:wi} and \ref{thm:vi}. 

\begin{algorithm}[H] \label{alg:EnFnctAprx}
	\caption{Computing HJB energy function approximations: $\cE_\gamma^-(\x)$ in \eqref{eq:wi_coeffs} and $\cE_\gamma^+(\x)$ in \eqref{eq:pastenerexp}. }
	\label{algo:energy_functions}
	\begin{algorithmic}[1]
		\Require System matrices $\A, \F, \B, \C$; polynomial degree $d$; constant $\gamma>\gamma_0>0$, $\gamma \neq 1$.
		\Ensure Coefficients $\{\v_i\}_{i=2}^d$ of the past energy and $\{\w_i\}_{i=2}^d$ of the future energy functions.
		\State Set $\eta = (1-\gamma^{-2})$. 
		\State Solve the $\cH_\infty$ Riccati equations
		\begin{align*}
            \bzero & = \A^\top \V_2 + \V_2\A -\eta \C^\top \C + \V_2 \B \B^\top \V_2, \\
            \bzero & = \A^\top \W_2 + \W_2\A + \C^\top \C - \eta \W_2 \B \B^\top \W_2
        \end{align*}
        and set $\v_2 = \text{vec}(\V_2)$ and $\w_2 = \text{vec}(\W_2)$.
        \For{$k=3,4, \ldots, d$}  Solve the systems for $\tv_k$ and $\tw_k$:
        \begin{equation*}
            \cL_k((\A+ \B \B^\top\V_2)^\top)\tv_k = - \cL_{k-1}(\F^\top) \v_{k-1} - \frac{1}{4}\sum_{\substack{i,j>2\\ i+j=k+2}} ij~\text{vec}(\V_i^\top \B \B^\top \V_j)
        \end{equation*}
        and
        \begin{equation*}
            \cL_k((\A - \eta \B \B^\top\W_2)^\top) \tw_k = - \cL_{k-1}(\F^\top) \w_{k-1} + \frac{\eta}{4}\sum_{\substack{i,j>2\\ i+j=k+2}} ij~\text{vec}(\W_i^\top \B \B^\top \W_j)
        \end{equation*}
        Symmetrize $\tw_k$ and $\tv_k$ to obtain $\w_k$ and $\v_k$.
        \EndFor
	\end{algorithmic}
\end{algorithm}

\subsection{Linear Solver and Implementation Details} \label{sec:LinearSolver}
While the earlier sections distilled a structured linear tensor system  for the coefficients of the Taylor-series expansion of the energy functions (see \eqref{eq:th6} and \eqref{eq:LinSysforvk}), the main computational bottleneck lies in solving these structured systems as they grow exponentially in $k$ and polynomially in $n$. We next propose an efficient implementation that heavily leverages the structure of the tensor systems. We present the details for~\eqref{eq:LinSysforvk}. The same ideas apply to~\eqref{eq:th6} as well.

First, to form the right-hand side of~\eqref{eq:LinSysforvk}, we develop a function that efficiently computes products of the form ${\cal L}_{k-1}(\F^\top) \v_{k-1}$.  This is the product of an $n^{k}\times n^{k-1}$ matrix with a vector and a na\"ive approach would require $O(n^{2k-1})$ operations using a level-2 BLAS operation.
However, we exploit the structure of ${\cal L}_k$ in \eqref{eq:dWayLyapunov}, which is the sum of Kronecker products of matrices, where only one term in each sum is not the identity matrix. Thus, using the Kronecker-vec relationship, each term in the sum can be computed by appropriately reshaping $\v_{k-1}$ into an $n^{k-2}\times n$ matrix, performing multiplication by $\F$ on the right, reshaping the result, and accumulating the sum.  This involves (k-1) instances of a ($n^{k-2}\times n$) by ($n\times n^2$) matrix multiplication, which can be performed with level-3 BLAS in  $O(kn^{k+1})$ operations.  For the summation terms on the right-hand side, we can take advantage of the repeated multiplications of $\V_i^\top \B$ and $\W_i^\top \B$, but the cost is dominated by multiplying these stored components together.  The cost of forming the summation terms is $O(kmn^k)$.  In this way, all terms on the right-hand side can be computed efficiently using level-3 BLAS operations with a cost $O(kn^{k+1})+O(kmn^k)$.  Using efficient implementations such as fast matrix multiplication in the BLAS, the actual computational time surpasses these estimates as we see in Section~\ref{sec:Burgers}.

Second, we need to solve linear systems of the form
\begin{equation}
\label{eq:genLinSys}
  \cL_k((\A + \B\B^\top\V_2)^\top)\tv_k = \b.
\end{equation}
We leverage the $k$-way Bartels-Stewart algorithm~\cite{borggaard2021PQR}.
By first performing a Schur factorization of $(\A + \B\B^\top\V_2)^\top = \U \T \U^*$ and defining a matrix $\kronF{\U}{k}=\U\otimes \U\otimes \cdots \otimes \U \in \Real^{n^k\times n^k}$, we convert the linear system \eqref{eq:genLinSys} to 
\begin{displaymath}
  [\kronF{\U}{k}]^* \cL_k((\A + \B\B^\top\V_2)^\top) [\kronF{\U}{k}] \hat{\v}_k = \hat{\b},
\end{displaymath}
where $\hat{\v}_k = [\kronF{\U}{k}]^* \tilde{\v}_k$ and $\hat{\b} = [\kronF{\U}{k}]^*\b$. The multiplications used to compute $\hat{\v}_k$ and $\hat{\b}$ are performed using a recursive algorithm that exploits the Kronecker-vec relationship and also uses level-3 BLAS operations.
The resulting upper triangular linear system is
\begin{equation}
  \cL_k(\T) \hat{\v}_k = \hat{\b},
  \label{eq:TensorSystem}
\end{equation}
{which can be solved by a backsubstitution procedure requiring $n^{k-1}$ dense upper triangular system solutions of size $n$.
Then, we compute the solution $\tv_k = \kronF{\U}{k}\hat{\v}_k$.
The overall computational complexity of solving 
\eqref{eq:genLinSys} in the $k$th step of 
Algorithm~\ref{alg:EnFnctAprx} 
is thus $O(n^{k+1})$ using the classical $O(n^2)$ complexity argument for backsubstitution in dense systems. 
Taken together, the computational complexity of solving the linear systems \eqref{eq:th6} and \eqref{eq:LinSysforvk} for the degree $k$ terms is $O(kmn^k)+O(kn^{k+1})$ or $O(n^{k+1})$ if $n>km$.}

The calculation of coefficients $\{\v_k\}$ and $\{\w_k\}$ are performed using the functions {\tt approxPastEnergy.m} and {\tt approxFutureEnergy.m} in the \texttt{NLbalancing} repository \cite{NLbalancing}.
We remark that linear systems with similar structure to \eqref{eq:genLinSys} have a long history in \textit{linear} systems theory as Lyapunov equations can be written in tensor form, cf.~\cite{brewer1978KroneckerProductsMatrix}. Furthermore, equations of the form ${{\cal L}_k(\S)} \v = \b$ arise from discretization of stationary PDEs with separable coefficients~\cite{chen2019RecursiveBlockedAlgorithms}, as well as when considering polynomial control laws for bilinear~\cite{breiten2018polyFeedback_bilinear} and polynomial systems~\cite{borggaard2019QQR,borggaard2021PQR}.

\section{Numerical Examples}  \label{sec:numerics}
This section illustrates the analysis and computational framework on four numerical examples. We start with two small models to illustrate the theory. First we consider a one-dimensional problem in Section~\ref{sec:exEnergyfcts1} where we can analytically compute the energy functions. Then, we present a two-dimensional example in Section~\ref{sec:exEnergyfcts2} which we solve numerically, where the energy functions can be visualized to build intuition for the higher-dimensional cases. The next two examples test the effectiveness and scalability of the algorithms to compute the energy functions.  These examples are generated from finite element discretizations of PDEs, so we can scale up the state dimension for the numerical study.  Two PDEs are chosen for their quadratic nonlinearities: the Burgers equation (Section~\ref{sec:Burgers}) and the Kuramoto-Sivashinsky equation (Section~\ref{sec:KS}).

\subsection{Illustrative scalar ODE example (n=1)} \label{sec:exEnergyfcts1}
Consider the 1d quadratic system from \cite{bennerGoyal2017BT_quadBilinear}:
\begin{equation}\label{eq:BGexample}
    \dot{x}(t) = a x(t) + \enn x(t)^2 + bu(t), \quad y(t) = c x(t).
\end{equation}
With $\eta = \left ( 1- \gamma^{-2}  \right )$ the HJB equation~\eqref{eq:HJB-NLHinfty2} for the future energy function is the ODE
\begin{equation} 
0 = \frac{\rd \cE_\gamma^{+}(x)}{\rd x} [a x + \enn x^2] - \frac{1}{2} b^2 \eta \left(\frac{\rd \cE_\gamma^{+}(x)}{\rd x}\right )^2+ \frac{1}{2}c^2 x^2.
\end{equation}
Let $p(x) = \frac{\rd \cE_\gamma^{+}(x)}{\rd x}$, then we have a quadratic polynomial in $p(x)$ as
\begin{equation} 
0 =  - \frac{1}{2} b^2 \eta p(x)^2 + [a x + \enn x^2] p(x)  + \frac{1}{2}c^2 x^2,
\end{equation}
for which we obtain the two solutions
\begin{equation}
    p(x)_{1/2} = \frac{ax + \enn x^2 \pm \sqrt{(ax + \enn x^2)^2 +\eta b^2 c^2 x^2}}{\eta b^2} .
\end{equation}
We integrate this solution with respect to $x$ (with condition $\cE_\gamma^{+}(0) =0$) to obtain
\begin{align} \label{eq:ExEgammaplus}
\cE_\gamma^{+}(x) = & \frac{1}{b^2\eta} \left ( \mp \frac{a b^2 c^2 \eta  \sqrt{x^2 ((a + \enn x)^2 + b^2 c^2 \eta)} \log \left (\sqrt{(a + \enn x)^2 + b^2 c^2 \eta} + a + \enn x)\right)}{2 \enn^2 x \sqrt{(a + \enn x)^2 + b^2 c^2 \eta}} \right. \nonumber \\
& \pm \left. \frac{ \sqrt{x^2 ((a + \enn x)^2 + b^2 c^2 \eta)} \left (\frac{(a + \enn x)^2}{3 \enn} - \frac {a (a + \enn x)}{2 \enn} + \frac{b^2 c^2 \eta}{3 \enn}\right )}{\enn x} + \frac{a x^2}{2} + \frac{\enn x^3}{3} \right ).  
\end{align}
Equation~\eqref{eq:HJB-NLHinfty1} for the past energy function becomes
\begin{align} 
0 = \frac{\rd \cE_\gamma^{-}(x)}{\rd x} [a x + \enn x^2] + \frac{1}{2}  b^2\left(\frac{\rd \cE_\gamma^{-}(x)}{\rd x}\right )^2 - \frac{1}{2} \eta c^2 x^2 = \frac{1}{2}b^2q(x)^2 + [ax +\enn x^2]q(x) - \frac{1}{2} \eta c^2 x^2.
\end{align}
Let $q(x) = \frac{\rd \cE_\gamma^{-}}{\rd x}(x)$ which we can solve similarly to get
\begin{align*} 
    q(x)_{1/2} = \frac{-(ax + \enn x^2) \pm \sqrt{(ax + \enn x^2)^2 +\eta b^2 c^2 x^2 }}{b^2}.
\end{align*}
We integrate this solution in $x$ to obtain the past energy function and obtain 
$$
\cE_\gamma^{-}(x) = -\eta \cE_\gamma^{+}(x).
$$
%
%
%
%
We observe that the energy functions are highly nonlinear in $x$. 
For $a=-2$, $\enn=1$, $b=2$, $c=2$, Figure~\ref{fig:Ex1_EgammaPlus} shows $\cE_\gamma^+(x)$ and $\cE_\gamma^-(x)$ for {$-6\leq x \leq 6$} and $\eta=0.5$ ($\gamma=\sqrt{2}$).  
We then apply Algorithm~\ref{alg:EnFnctAprx} with $d=2,4,6,8$ and compute the corresponding polynomial approximations to  $\cE_\gamma^+(x)$ and $\cE_\gamma^-(x)$.
Near the origin, all of the approximations accurately approximate the true energy functions.  However, the quadratic approximation (degree $d=2$) is relatively far from the analytic solution for $|x|>2$ compared to higher degree approximations. Note that degree $d=2$ approximations would be found by either ignoring the quadratic term in \eqref{eq:BGexample}, which is common for linearization approaches, or {other approaches that define localized algebraic Gramians.}  However, over this spatial region, a modest degree $d=8$ approximation is very accurate. 
Going to higher degree approximations extends balanced truncation to a wider region around the origin.  
However, all of these approximations are local and energy function approximations by polynomials might even become negative away from the origin. For example---while not shown on the figure---the fourth and eighth degree polynomial approximations for $\cE_\gamma^{-}$ become negative around $x\approx 8.5$ and $x\approx 6.8$, respectively. This is still in line with the theory of nonlinear balanced truncation, which is always formulated as \textit{local} balancing, see~\cite{scherpen1993balancing,scherpen1994normCoprime,scherpen1996hinfty_balancing}.
In sum, one should test the quality of the energy function approximation before using them for balancing, and one may use their positivity as an indicator of the spatial domain where the model can be locally balanced. 

\begin{figure}
    \centering
    \includegraphics[width=0.49\textwidth]{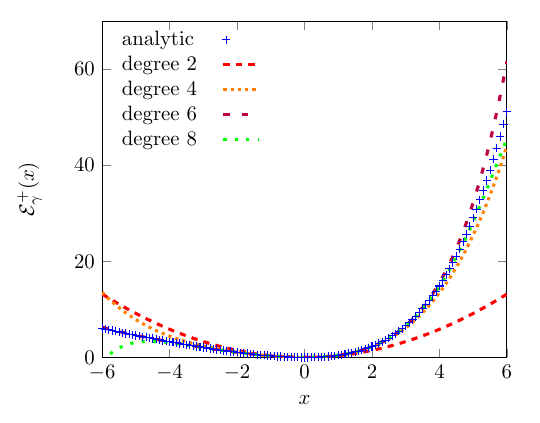}
    \vspace{-.6cm}
    \includegraphics[width=0.49\textwidth]{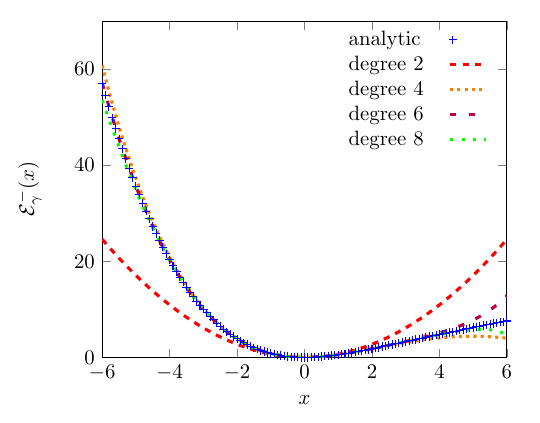}
    \caption{Energy functions $\cE_\gamma^{+}(x)$ (left) and $\cE_\gamma^-(x)$ (right) computed for Example~\ref{sec:exEnergyfcts1}. The analytic solutions are compared to polynomial approximations of varying degree and illustrate that higher-degree approximations are needed to accurately capture the trends in the energy functions.}
    \label{fig:Ex1_EgammaPlus}
\end{figure}
%
%

\subsection{Illustrative ODE system example (n=2)} \label{sec:exEnergyfcts2}
We modify the 2d example from \cite[Sec~IV.C]{kawano2016model} to
\begin{equation}
    \dot{\x} = \A \x + \F (\x \otimes \x) + \B u , \qquad y = \C\x 
\end{equation}
where 
\begin{equation}
\A\x  = \begin{bmatrix} -x_1 + x_2 \\ -x_2 \end{bmatrix}, 
\quad 
\F(\x\otimes \x) = \begin{bmatrix} -x_2^2 \\ 0\end{bmatrix}, 
\quad  
\B = \begin{bmatrix} 1 & 1 \end{bmatrix}^\top, 
\quad 
\C  = \begin{bmatrix} 1 & 1 \end{bmatrix}.
\end{equation}
Figure~\ref{fig:Ex2_Egamma} shows the future and past energy function for $\eta=0.1$ ($\gamma\approx 1.054$) over $-1\leq x_1, x_2 \leq 1$ computed using a degree six approximation in Algorithm~\ref{algo:energy_functions}.
We see that the energy function again shows nonquadratic behavior. For instance, for $\cE_\gamma^+$, consider the diagonal line from $(x_1,x_2) = (-1,-1)$ to $(1,1)$ and note that the contour plots show a larger gradient at the bottom left than at the top right of the figure. This is similar to the asymmetry in Figure~\ref{fig:Ex1_EgammaPlus}.  The same observation (but on the diagonal from $(-1,1)$ to $(1,-1)$) can be made for $\cE_\gamma^-$. A quadratic approximation to the energy function would not be able to match that behavior. 

\begin{figure}[h!]
    \centering
    \includegraphics[width=0.49\textwidth]{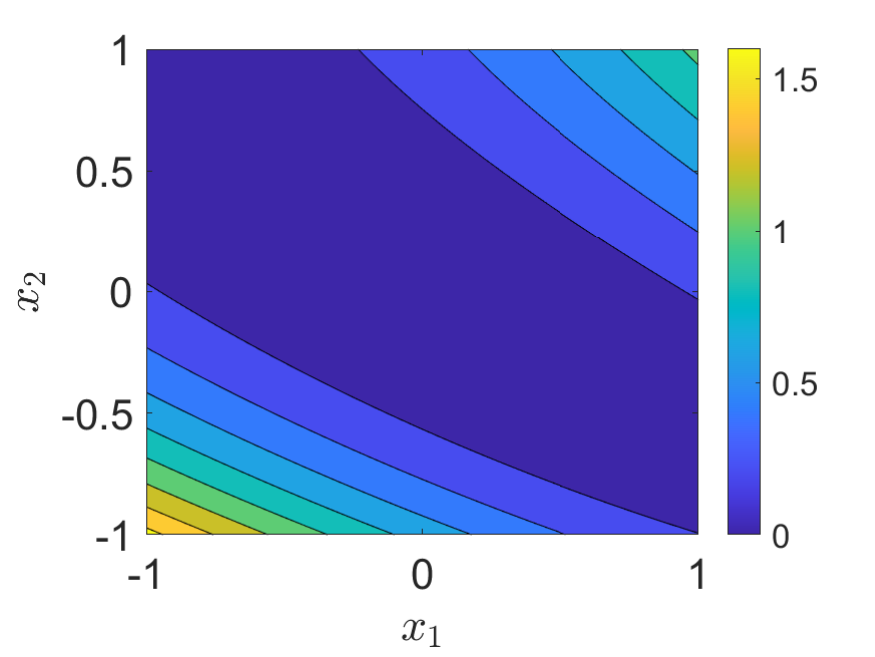}\hfill
    \includegraphics[width=0.49\textwidth]{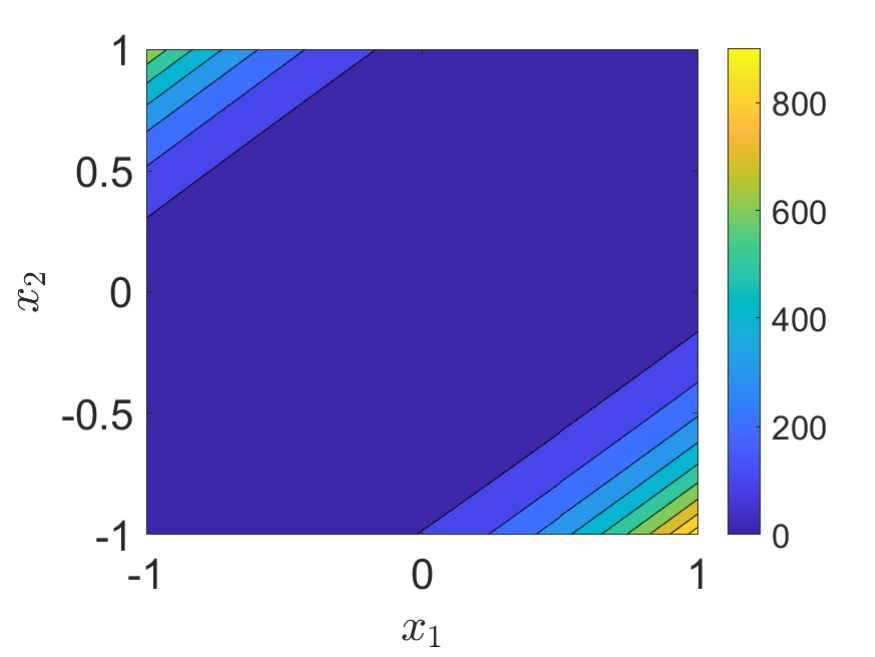}
    \caption{Energy functions $\cE_\gamma^{+}({\bf x})$ (top) and $\cE_\gamma^-({\bf x})$ (bottom) computed for Example~\ref{sec:exEnergyfcts2}.}
    \label{fig:Ex2_Egamma}
\end{figure}

As a verification example, we repeat this experiment for $\eta=0$ and choose the initial condition $x_0=[0.25,-0.25]$.  When $\eta=0$, the future energy function becomes the observability energy function $\cE_c$ from \eqref{eq:energyFunctions2}, which we can calculate analytically by solving first for $x_2(t)$, then using this to calculate $x_1(t)$.  Integrating $y^2(t)=(x_1(t)+x_2(t))^2$ from $0$ to $\infty$ gives us the exact solution for the forward energy function for this special case.   The results in Table~\ref{tab:futureVerification} verify the quality of the polynomial approximations and emphasize the fact that the energy functions are not quadratic (and are exactly quartic for $\eta=0$).  Due to sparsity in $\F$, the coefficients computed using Algorithm~\ref{alg:EnFnctAprx} terminate at the degree 4 term, which is consistent with the analytical solution.

\begin{table}[ht]
  \centering
  \caption{Polynomial approximations of $\cE_\gamma^{+}=\cE_c$ for $\eta=0$ compared to the analytic solution.}
  \label{tab:futureVerification}
  \begin{tabular}{r|c}
  $d$ & ${\cal E}_d^+(\z_0)$ \\
  \hline
    2 & 7.81250000e-03  \\ 
    3 & 9.98263889e-03  \\ 
    4 & 1.01453993e-02  \\ 
  \hline\hline
  analytic & 1.01453993e-02
  \end{tabular}
\end{table}

\begin{remark}
The previous two illustrative examples highlight two important aspects. First, quadratic approximations to the energy function (as done, e.g., in the algebraic Gramian framework~\cite{al1994new,condon2005nonlinear,verriest2006algebraicGramiansNLBT,bennerGoyal2017BT_quadBilinear} and in the LTI case), do not match the nonlinear, asymmetric behavior of the shown energy functions well, except very locally. Thus, there is a need for higher-degree polynomial approximations.
Second, despite the energy functions being highly nonlinear, polynomial approximations capture their behavior well, c.f. Figure~\ref{fig:Ex1_EgammaPlus}. We conclude that polynomial approximations of degree higher than two provide more accurate approximations to the energy functions (qualitatively and quantitatively) over a larger neighborhood of the origin.    
\end{remark}

\subsection{Burgers equation\label{sec:Burgers}}
This test problem has a long history in the study of control for 
distributed parameter systems, e.g.~\cite{thevenet2009nonlinear}, including the 
development of effective computational methods, e.g.~\cite{burns1990control}.   
Consider
\begin{align}
\nonumber
  z_t(x,t) &= \epsilon z_{xx}(x,t) - \frac{1}{2}\left(z^2(x,t)\right)_x +  
  \sum_{j=1}^m b_j^m(x) u_j(t),\\
  \label{eq:averagingObservation}
  y_i(t) &= \int_{\chi_{[(i-1)/p,i/p]}}\hspace{-2em}z(x,t) \text{d} x, \qquad i=1,\ldots,p, 
\end{align}
with initial condition $z(\cdot,0) = z_0(\cdot) \in H_0^1(0,1)$ and the control input defined using the characteristic function $\chi$ as
$b_j^m(x) = \chi_{[(j-1)/m,j/m]}(x)$. The outputs are spatial averages of the solution over equally-spaced subdomains.  We discretize the state equation with $n+1$ linear finite elements leading to $n$ states, set $m=4$ and $p=4$, and chose $\epsilon=0.001$ to make the nonlinearity significant.  
We test the value of the energy functions at
\begin{displaymath}
  z_0(x) = \left\{ \begin{array}{cl}
  0.004 \sin(2\pi x)^2 & x\in(0,0.5) \\
  0 & \mbox{otherwise}
  \end{array} \right. .
\end{displaymath}
The discretized system has the form
\begin{align}\label{eq:discreteCI}
  \widetilde{\bf E}\dot{\bf z} = \widetilde{\bf A}{\bf z} + \widetilde{\bf N}_2\left( {\bf z}\otimes{\bf z} \right) + \widetilde{\bf B}{\bf u},\qquad 
  {\bf y} = \widetilde{\bf C}{\bf z},
\end{align}
where ${\bf z}(t)$ are coefficients of the finite element approximation to $z(x,t)$ and $\z_0$ are finite element coefficients from a best approximation to $z_0$.  To place this in the form \eqref{eq:FOMx}-\eqref{eq:FOMy} with $\ell=2$, we introduce
the change of variables ${\bf x} = {\bf S}{\bf z}$ where ${\bf S} = \widetilde{\bf E}^{1/2}$ (a matrix square root of the finite element mass matrix).  Then defining
$\A=\S^{-1}\widetilde{\A}{\S}^{-1}$, $\B={\bf S}^{-1}\widetilde{\bf B}$, ${\bf C} = \widetilde{\bf C}{\bf S}^{-1}$, $\widetilde{\bf N}_2 = {\bf N}_2({\bf S}^{-1}\otimes{\bf S}^{-1})$ leads to a system in the required form.

In Table~\ref{tab:Burgers_CPU}, we compute cubic approximations to the future energy function for increasing discretization sizes of the Burgers equation using the value $\eta=0.9$.  
The table illustrates the efficiency of the our algorithm for increasing problem sizes. 
Computations were performed on a 2019 Mac Pro desktop with 2.7 GHz 24-core Intel Xeon W processors.  While our flop-count analysis in Section~\ref{sec:LinearSolver} predicts computational cost with growth of $O(n^4)$ (since $d=3$), the table indicates a CPU time scaling of approximately $O(n^{2.84})$.  Repeating this for quartic approximations (Table~\ref{tab:Burgers_CPU4}), we find growth of $O(n^{3.57})$. This suggests that CPU time scales more like $O(n^{d})$ for our problem sizes.
Notably, our proposed framework allows us to compute a high-resolution approximation of the cubic term in the energy function for a state-space dimension of $n=1024$ using 64GB of RAM. The cubic coefficient is defined through the solution of linear system of size $10^9$.  By exploiting the Kronecker structure of the linear system and using the efficient BLAS-3 level implementation outlined in Section~\ref{sec:LinearSolver}, we can perform this calculation just over 2 hours (7,930s) of CPU time.

\begin{table}[ht]
    \centering
    \caption{Degree 3 Future Energy Function Approximation ($d=3$) for the Discretized Burgers Equation.}
    \label{tab:Burgers_CPU}
    \begin{tabular}{r|ccc}
    $n$ & $n^{3}$ & CPU sec & ${\cal E}_3^+(\z_0)$ \\
    \hline
         8 & 5.1200e+02 & 2.96e-02 & 1.144557e-06 \\ 
        16 & 4.0960e+03 & 1.08e-02 & 1.116244e-06 \\ 
        32 & 3.2768e+04 & 5.96e-02 & 1.093503e-06 \\ 
        64 & 2.6214e+05 & 4.40e-01 & 1.099870e-06 \\ 
       128 & 2.0972e+06 & 4.29e+00 & 1.097715e-06 \\ 
       256 & 1.6777e+07 & 5.48e+01 & 1.095300e-06 \\ 
       512 & 1.3422e+08 & 6.63e+02 & 1.096322e-06 \\ 
      1024 & 1.0737e+09 & 7.93e+03 & 1.096093e-06    
    \end{tabular}
\end{table}

\begin{table}[ht]
  \centering
  \caption{Degree 4 Future Energy Function Approximation ($d=4$) for the Discretized 
  Burgers equation.}
  \label{tab:Burgers_CPU4}
  \begin{tabular}{r|ccc}
  $n$ & $n^{4}$ & CPU sec & ${\cal E}_4^+(\z_0)$ \\
  \hline
      8 & 4.0960e+03 & 4.49e-02 & 1.144783e-06 \\ 
     16 & 6.5536e+04 & 1.38e-01 & 1.116636e-06 \\ 
     32 & 1.0486e+06 & 1.77e+00 & 1.093928e-06 \\ 
     64 & 1.6777e+07 & 3.28e+01 & 1.100306e-06 \\ 
    128 & 2.6844e+08 & 6.86e+02 & 1.098153e-06
  \end{tabular}
\end{table}

For $n=8$, Table~\ref{tab:Burgers_energy} shows the energy functions ${\cal E}_d^-(\z_0)$
and ${\cal E}_d^+(\z_0)$ as the polynomial degree $d$ increases. The past energy function
${\cal E}_d^-(\z_0)$ converges more slowly in this specific example, while the future energy function ${\cal E}_d^+(\z_0)$ converges quickly with~$d$.

\begin{table}[ht]
  \centering
  \caption{Energy Function Approximations  with Increasing Degree for the Discretized Burgers Equation ($n=8$).}
  \label{tab:Burgers_energy}
  \begin{tabular}{r|cc}
  $d$ & ${\cal E}_d^-(\z_0)$ & ${\cal E}_d^+(\z_0)$ \\
  \hline
    2 & 3.161325e-05 & 1.146135e-06 \\ 
    3 & 2.731740e-05 & 1.144557e-06 \\ 
    4 & 2.370917e-05 & 1.144783e-06 \\ 
    5 & 2.593642e-05 & 1.144792e-06 \\ 
    6 & 2.662942e-05 & 1.144791e-06 \\ 
    7 & 2.519892e-05 & 1.144791e-06 \\ 
    8 & 2.538956e-05 & 1.144791e-06 
  \end{tabular}
\end{table}

\subsection{Kuramoto-Sivashinsky equation} \label{sec:KS}

We consider the  system described by the Kuramoto-Sivashinsky equation 
for $x\in(0,1)$ and $t>0$,
\begin{align}\label{eq:ks}
  z_t(x,t) = & -\epsilon z_{xx}(x,t) -\epsilon^2 z_{xxxx}(x,t) \nonumber \\
   &  -\epsilon (z(x,t)^2)_x + \sum_{j=1}^{m} b_j^m(x)u_j(t)
\end{align}
subject to periodic boundary conditions $ z(0,t) = z(1,t)$ and $z_x(0,t) = z_x(1,t)$.  We use the same control input functions $b_j^m$ as in Section~\ref{sec:Burgers} as well as the integral observation operator in \eqref{eq:averagingObservation}.
This version of the equations can be found in \cite{dankowiczLocalModelsSpatiotemporally1996,holmes_lumley_berkooz_1996}. Here we discretize the control problem using Hermite cubic finite elements to account for the fourth derivative term.  The same change of variables as described in the previous example brings this problem to the standard form (\eqref{eq:FOMx}-\eqref{eq:FOMy}).  We use an initial condition of $z(x,0)=z_0(x)=\frac{0.1}{\sqrt{\epsilon}}\sin(4\pi x)$ and the parameter $\epsilon=1/13.0291^2$, which is known to exhibit heteroclinic cycles in the open-loop system.  
Repeating the experiments for the Burgers equation, with $m=5$ and $p=2$, and here choosing $\eta=0.1$, we find a similar convergence for the future energy function approximation in Table~\ref{tab:KS_CPU}. Likewise, with $n=16$, we have the convergence of the past and future energy functions with increasing polynomial degree seen in Table~\ref{tab:KS_energy}.  The energy function approximations converge at a point near the origin in both number of states $n$ and polynomial degree $d$.  We also observe a growth in CPU time that outperforms the predicted flop-count in Section~\ref{sec:LinearSolver}.

\begin{table}[ht]
    \centering
    \caption{Future Energy Function Approximation ($d=3$) for the Discretized Kuramoto-Sivashinsky Equation.}
    \label{tab:KS_CPU}
    \begin{tabular}{r | c  c  c}
    $n$ & $n^{3}$ & CPU sec & ${\cal E}_3^+(\z_0)$ \\
    \hline
        16 & 4.0960e+03 & 1.20e-02 & 4.369195e+00 \\ 
        32 & 3.2768e+04 & 8.44e-02 & 5.099752e+00 \\ 
        64 & 2.6214e+05 & 5.54e-01 & 4.793412e+00 \\ 
       128 & 2.0972e+06 & 9.14e+00 & 4.732940e+00 \\ 
       256 & 1.6777e+07 & 1.37e+02 & 4.811878e+00 \\ 
       512 & 1.3422e+08 & 1.70e+03 & 4.827930e+00 \\ 
      1024 & 1.0737e+09 & 2.04e+04 & 4.807904e+00
    \end{tabular}
\end{table}

\begin{table}[ht]
  \centering
    \caption{Approximating Energy Functions with Increasing Degree with $n=16$ for the Discretized Kuramoto-Sivashinsky Equation.}
  \label{tab:KS_energy}
  \begin{tabular}{r|cc}
  $d$ & ${\cal E}_d^-(\z_0)$ (CPU sec) & ${\cal E}_d^+(\z_0)$ (CPU sec)\\
  \hline
    2 & 5.2913043e+00 (2.71e-03) & 4.3690773e+00 (6.81e-03) \\ 
    3 & 4.1573639e+00 (1.47e-02) & 4.3691951e+00 (9.88e-03) \\ 
    4 & 4.2904579e+00 (2.22e-01) & 4.3469410e+00 (1.37e-01) \\ 
    5 & 4.2814109e+00 (4.00e+00) & 4.3467633e+00 (2.40e+00) \\ 
    6 & 4.2830236e+00 (7.46e+01) & 4.3467610e+00 (4.39e+01)
  \end{tabular}
\end{table}

\section{Conclusions and future directions} \label{sec:conclusions}
We proposed a unifying and scalable approach to computing a family of $\cH_\infty$ energy functions. We employed Taylor series expansion for solving a class of parametrized HJB equations, which are at the core of nonlinear balanced truncation. 
The proposed approach was made feasible through deriving a linear tensor structure for the coefficients of the Taylor series, and by heavily exploiting that structure in the resulting linear systems with billions of unknowns.
Our last numerical example considered a classical control problem for semi-discretized PDEs in moderate to large dimensions where we computed high-fidelity polynomial approximations to the $\cH_\infty$ energy functions.  This computational framework will pave the way to perform nonlinear balanced truncation using the true energy functions (without quadratic approximation and/or without algebraic Gramians) for systems  with moderately large dimensions. 
In future work, we plan to investigate further the convergence of the polynomial approximations to the energy functions and extend the results to control-affine systems with polynomial input, output and drift terms. 
A further benefit of this approach is that the energy functions in the $\mathcal{H}_\infty$ and HJB balancing methods automatically produce controllers for nonlinear systems, see Section~\ref{sec:NLBal}. In \cite{breiten2018polyFeedback_bilinear,almubarak2019infinite,borggaard2019QQR,borggaard2021PQR} it is shown that polynomial control laws have better performance than linear quadratic regulators.

\section{Acknowledgements}
We thank Nick Corbin for valuable comments on the several drafts of this manuscript. This work was supported in part by the NSF under Grant CMMI-2130727 and is based upon work supported by the National Science Foundation under Grant No. DMS-1929284 while the first three authors were in residence at the Institute for Computational and Experimental Research in Mathematics in Providence, RI, during the Spring 2020 Semester Program "Model and dimension reduction in uncertain and dynamic systems" and Spring 2020 Reunion Event. 

\bibliographystyle{elsarticle-num} 
\bibliography{NLbal}

\begin{appendices}
\section{HJB balancing energy functions} \label{ss:HJBbal}
The Hamilton-Jacobi-Bellman (HJB) balancing approach developed in~\cite{scherpen1994normCoprime}  applies to \textit{unstable} nonlinear systems as well, and hence defines energy functions that include both the control and observation penalties simultaneously. For linear systems, HJB-balancing and LQG-balancing~\cite{verriest1981suboptimal,jonckheere1983LQGbalancing} are identical concepts. For nonlinear systems they lead to different definitions. This section focuses on HJB balancing, which defines the \textit{past energy function} ($\cE^{-}(\x_0)$) and \textit{future energy function} ($\cE^{+}(\x_0)$) for a nonlinear system as
\begin{equation} \label{eq:HJBenergyFunctions1}
\cE^{-}(\x_0)  :=\min_{\substack{\u \in L_{2}(-\infty, 0] \\ \x(-\infty) = \bzero \\ \x(0) = \x_0}} \ \frac{1}{2} \int_{-\infty}^{0} \Vert \y(t) \Vert^2  +  \Vert \u(t) \Vert^2 \text{d}t,
\end{equation}
\begin{equation}\label{eq:HJBenergyFunctions2}
\cE^{+}(\x_0) := \min_{\substack{\u \in L_{2}[0,\infty) \\ \x(0) = \x_0 \\ \x(\infty) = \bzero}} \frac{1}{2} \int_{0}^{\infty} \Vert \y(t) \Vert^2 + \Vert \u(t) \Vert^2 \text{d}t. 
\end{equation}
As shown in~\cite[Thm. 15]{scherpen1994normCoprime}, the past energy function $\cE^{-}(\x_0)$ is a solution to the Hamilton-Jacobi-Bellman equation
\begin{equation} \label{eq:HJB-NLLQG4}
0 = \frac{\partial \cE^{-}(\x)}{\partial \x} \f(\x) + \frac{1}{2}\frac{\partial \cE^{-}(\x)}{\partial \x} \g(\x) \g(\x)^\top \frac{\partial^\top \cE^{-}(\x)}{\partial \x} - \frac{1}{2}\h(\x)^\top \h(\x),
\end{equation}
and the future energy function $\cE^{+}(\x_0)$ is a solution to the Hamilton-Jacobi-Bellman equation
\begin{equation} \label{eq:HJB-NLLQG3}
0  = \frac{\partial \cE^{+}(\x)}{\partial \x} \f(\x) - \frac{1}{2}\frac{\partial \cE^{+}(\x)}{\partial \x} \g(\x) \g(\x)^\top \frac{\partial^\top \cE^{+}(\x)}{\partial \x} + \frac{1}{2}\h(\x)^\top \h(\x). 
\end{equation}

\section{LQG balancing energy functions} \label{ss:LQGbal}
While HJB balancing and LQG balancing are identical for linear systems, their extensions to nonlinear systems are different. The authors in \cite{scherpen1994normCoprime} consider an extension of the LQG balancing problem to nonlinear systems, noting that \textit{``the formulation of LQG balancing for linear systems cannot easily be extended to nonlinear systems. The usual stochastic formulation of the LQG problem seems not to be the right formulation for nonlinear systems. However, there exists a deterministic formulation of the LQG problem, which is equivalent to the stochastic formulation, and which has been extended to nonlinear systems; see \cite{mortensen1968maximum} and \cite{hijab1979minimum}".}
The deterministic nonlinear extension of LQG balancing poses the optimization
\begin{align}
\begin{split}
	\min_\u	 \quad &  w_0(\x_0) + \frac{1}{2}  \int_0^{t_f} \Vert \u(t) \Vert^2 + \Vert \boldsymbol{\eta}(t) \Vert^2 \ \text{d} t \\
	\text{s.t.} &\quad 	  \dot{\x}(t) = \f(\x(t)) + \g(\x) \u(t) \\
								& \quad \y(t) = \h (\x(t))  + \boldsymbol{\eta}(t),
\end{split}								
\end{align}
where $\boldsymbol{\eta}$ is an additive noise, $t_f$ is a final time, and $w_0(\x_0)$ is a real-valued function representing the initial cost, with $w_0(\bzero) =0$, see~\cite{hijab1979minimum}.
Following \cite[Sec. 5.1]{scherpen1994normCoprime}, let $w(t,\x)$ be a solution to the \textit{Mortensen equation}
\begin{equation} \label{eq:HJB-NLLQG1}
0 = \frac{\partial w(t,\x)}{\partial t} + \h(\x) \y(t) + \frac{\partial w(t,\x)}{\partial \x} \f(\x) 
+\frac{1}{2}\frac{\partial w(t,\x)}{\partial \x} \g(\x) \g(\x)^\top \frac{\partial^\top w(t,\x)}{\partial \x} - \frac{1}{2}\h(\x)^\top \h(\x),
\end{equation}
with initial condition $w(0,\x) = w_0(\x)$. Also let $v(\x)$ be the smooth positive solution to the HJB equation
\begin{equation} \label{eq:HJB-NLLQG2}
0 =  \frac{\partial v(\x)}{\partial \x} \f(\x) - \frac{1}{2}\frac{\partial v(\x)}{\partial \x} \g(\x) \g(\x)^\top \frac{\partial v(\x)^\top}{\partial \x} + \frac{1}{2}\h(\x)^\top  \h(\x),
\end{equation}	
with $v(\bzero) = 0$.
Then the dynamics of the deterministic estimate $\hat{\x}$ of the state $\x$ are given by
\begin{equation}
\dot{\hat{\x}} = \f(\hat{\x}) + \g(\hat{\x}) \u + \left ( \frac{\partial^2 w}{\partial \x^2} (t,\hat{\x}) \right )^{-1}  \left ( \frac{\partial \h}{\partial \x}(\hat{\x}) \right ) ^\top (\y(t) - \h(\hat{\x})),
\end{equation}	
and a control can be obtained via 
\begin{equation}\label{eq:control}
\u(\x) = - \g(\hat{\x})^\top \frac{\partial^\top v}{\partial \x}(\hat{\x}).
\end{equation}
The energy functions $w(t,\x)$ and $v(\x)$ are essential in obtaining the control and filter for the system, yet the time-dependence of $w(t,\x)$ causes difficulties. The HJB past energy function~\eqref{eq:HJB-NLLQG4} can be viewed as a steady state version of~\eqref{eq:HJB-NLLQG1}. 
Moreover, in the case of an LTI system, the control $\u(t)$ and observer $\hat{\x}(t)$ are the usual LQG solutions (see 
\cite[Prop. 3.4]{glover91Hinftybalancing}), as $\frac{\partial^2 w}{\partial \x^2} (t,\hat{\x}) \equiv \Q$ is the solution of the filter algebraic Riccati equation (ARE) and $\frac{\partial v}{\partial \x}^\top (t,\hat{\x}) \equiv \P\x$, where $\P$ is the solution of the control ARE.
\end{appendices}

\end{document}